\definecolor{refkey}{gray}{.3}
\definecolor{labelkey}{rgb}{.7,0.4,0}
   \def\pdot{{\color{purple}{\hskip-.0truecm\rule[-1mm]{4mm}{4mm}\hskip.2truecm}}\hskip-.3truecm}
 \def\cor{\color{red}}
 \def\cob{\color{black}}
\begin{document}
\def\dbar{\overline\partial}
\def\comm{L}
\def\intint{\int\!\!\!\!\int}
\def\intinttext{\int\!\!\!\int}
\def\intintint{\int\!\!\!\!\int\!\!\!\!\int}
\def\intintintint{\int\!\!\!\!\int\!\!\!\!\int\!\!\!\!\int}
\def\ques{{\cor \underline{??????}\cob}}
\def\nto#1{{\coC \footnote{\em \coC #1}}}
\def\fractext#1#2{{#1}/{#2}}
\def\fracsm#1#2{{\textstyle{\frac{#1}{#2}}}}   
\def\baru{U}
\def\nnonumber{}
\def\palpha{p_{\alpha}}
\def\valpha{v_{\alpha}}
\def\Ualpha{U_{\alpha}}
\def\qalpha{q_{\alpha}}
\def\walpha{w_{\alpha}}
\def\falpha{f_{\alpha}}
\def\dalpha{d_{\alpha}}
\def\galpha{g_{\alpha}}
\def\halpha{h_{\alpha}}
\def\plusdelta{+\delta}
\def\psialpha{\psi_{\alpha}}
\def\psibeta{\psi_{\beta}}
\def\betaalpha{\beta_{\alpha}}
\def\gammaalpha{\gamma_{\alpha}}
\def\Talpha{T}
\def\TTalpha{T_{\alpha}}
\def\TTalphak{T_{\alpha,k}}
\def\falphak{f^{k}_{\alpha}}

\def\vbar{\overline v}
\def\ubar{\overline u}
\def\pbar{\overline p}
\def\qbar{\overline q}
\def\wbar{\overline w}
\def\zbar{\overline z}

\newcommand{\bv}{{u}}
\newcommand{\R}{\mathbb{R}}
\newcommand{\N}{\mathbb{N}}
\newcommand{\Z}{\mathbb{Z}}

\newcommand{\Br}{B_r(x_0)}
\newcommand{\Qr}{Q_r(x_0,t_0)}

\newcommand{\hilight}[1]{\colorbox{yellow}{#1}}

\def\pdot{}
\def\Pdot{}
\def\tdot{\fbox{\fbox{\bf\tiny I'm here; \today \ \currenttime}}}

   \baselineskip=15pt

\def\nts#1{{\color{red}\hbox{\bf ~#1~}}} 

\def\mbar{{\overline M}}
\def\tilde{\widetilde}
\newtheorem{Theorem}{Theorem}[section]
\newtheorem{Corollary}[Theorem]{Corollary}
\newtheorem{Proposition}[Theorem]{Proposition}
\newtheorem{Lemma}[Theorem]{Lemma}
\newtheorem{Remark}[Theorem]{Remark}
\newtheorem{definition}{Definition}[section]
\def\theequation{\thesection.\arabic{equation}}
\def\endproof{\hfill$\Box$\\}
\def\square{\hfill$\Box$\\}
\def\comma{ {\rm ,\qquad{}} }            
\def\commaone{ {\rm ,\qquad{}} }         
\def\dist{\mathop{\rm dist}\nolimits}    
\def\sgn{\mathop{\rm sgn\,}\nolimits}    
\def\Tr{\mathop{\rm Tr}\nolimits}    
\def\div{\mathop{\rm div}\nolimits}    
\def\curl{\mathop{\rm curl}\nolimits}    
\def\supp{\mathop{\rm supp}\nolimits}    
\def\divtwo{\mathop{{\rm div}_2\,}\nolimits}    
\def\re{\mathop{\rm {\mathbb R}e}\nolimits}    
\def\indeq{\qquad{}\!\!\!\!}                     
\def\period{.}                           
\def\semicolon{\,;}                      
\def\TT{S}


\title{An anisotropic partial regularity criterion for the Navier-Stokes equations}
\author{Igor Kukavica}
\address{Department of Mathematics, University of Southern California, Los Angeles, CA 90089}
\email{kukavica@usc.edu}

\author{Walter Rusin}
\address{Department of Mathematics, Oklahoma State University, Stillwater, OK 74078}
\email{walter.rusin@okstate.edu}

\author{Mohammed Ziane}
\address{Department of Mathematics, University of Southern California, Los Angeles, CA 90089}
\email{ziane@usc.edu}

\begin{abstract}
In this paper, we address the partial regularity of suitable weak solutions of the incompressible Navier--Stokes equations. We prove an interior regularity criterion involving only one component of the velocity. Namely, if $(u,p)$ is a suitable weak solution and a certain scale-invariant quantity involving only $u_3$ is small on a space-time cylinder $Q_r(x_0,t_0)$, then $u$ is regular at $(x_0,t_0)$.
\end{abstract}

\maketitle

\section{Introduction}\label{sec:intro}
\setcounter{equation}{0}

The goal of this paper is to address the partial regularity of solutions of the 3D Navier--Stokes equations
 \begin{align}\label{NSE}
	\partial_t u - \Delta u + \sum_{j=1}^3\partial_j (u_j u) + \nabla p 
           & = 0 \indeq \nonumber \\
	\div u & = 0 \indeq 
	  \end{align}
where $u(x,t)=(u_1(x,t),u_2(x,t),u_3(x,t))$ and $p(x,t)$ denote the unknown velocity and the pressure. 

The theory of partial regularity for the NSE ,
whose aim is to estimate the
Hausdorff dimension of the singular set and development of interior
regularity criteria, was initiated by Scheffer in \cite{S1,S2}. 
In a
classical paper~\cite{CKN}, Caffarelli, Kohn, and Nirenberg proved
that 
for a suitable weak solution
the one-dimensional parabolic Hausdorff measure (parabolic
Hausdorff length) of the singular set equals zero. 
Recall that a point is regular if there exists a neighborhood
in which $u$ is bounded (and thus H\"older continuous);
otherwise,
the point is called singular.
Their
interior regularity criterion reads as follows: There exist two constants
$\epsilon_{\text{CKN}}\in(0,1]$ and $\alpha\in(0,1)$ such that if
  \begin{align*}
	\int_{Q_1}(|u|^3 + |p|^{3/2})\;dx dt \leq \epsilon_{\text{CKN}}
  \end{align*}
then
  \begin{align*}
	\|u(x,t)\|_{C^\alpha(Q_{1/2})} <\infty
  \end{align*}
where $Q_r = \{(x,t): |x| < r, -r^2 \leq t \leq 0 \}$. 
Alternative proofs were given by 
Lin~\cite{Li},
Ladyzhenskaya and Ser\"egin~\cite{LS}, 
an author of the present paper \cite{K1,K2}, 
Vasseur~\cite{V}, 
and Wolf~\cite{W1,W2}. 
The problem of partial regularity of the solutions of the
Navier--Stokes equations has since then been addressed in various
contexts \cite{KP, RS1, RS2, RS3, Se1, Se2} and a variety of interior regularity
criteria has been proposed. In particular Wolf
proved in \cite{W2} the following: There exists
$\epsilon_\text{W}>0$ such that if
  \begin{align*}
	\int_{Q_1}|u|^3\;dx dt 
        \le \epsilon_{\text{W}}
  \end{align*}
then the solution $u(x,t)$ is regular at the point $(0,0)$.

In a recent paper \cite{WZ}, Wang and Zhang proved an
anisotropic interior regularity criterion, which states: 
For every $M>0$ there exists
$\epsilon_{\text{WZ}}(M)>0$ such that if
  \begin{align*}
	\int_{Q_1} (|u|^3 + |p|^{3/2})\;dx dt \leq M
  \end{align*}
and 
  \begin{align*}
	\int_{Q_1}|u_h|^3 \;dx dt \leq \epsilon_{\text{WZ}}(M)
  \end{align*}
where $u_h = (u_1,u_2)$, then the solution $u(x,t)$ is regular at the
point $(0,0)$. Their result can be viewed as a local version of the
component-reduction regularity. Regularity is obtained by imposing
conditions only on some components of the velocity, rather that of three. 
For a comprehensive review of such results we refer the reader to 
\cite{M,PP}
and references therein. 

The purpose of this paper is to prove an interior regularity criterion
involving only one component of the velocity. 
Using a different argument from \cite{WZ},
we prove
the following stronger
statement: For every $M>0$ there
exists a constant $\epsilon(M)>0$ such that if 
  \begin{align}
	\int_{Q_1} (|u|^3 + |p|^{3/2})\;dx dt \leq M
   \label{EQ15}
  \end{align}
and 
  \begin{align*}
           \int_{Q_1} |u_3|^3 
            \;dx dt
           \leq \epsilon(M)
  \end{align*}
then $u(x,t)$ is regular at the point $(0,0)$. 
For the statement, cf.~Theorem~\ref{thm1} below. 
Note that every suitable weak solution
satisfies \eqref{EQ15} for $M$ sufficiently large.
The contradiction
argument used to prove Theorem~\ref{thm1} may be also used to prove a
new interior regularity criterion
based on the pressure. 
Namely, in Theorem~\ref{thm4} we prove that
if \eqref{EQ15} holds and if
  \begin{equation}
    \int_{Q_1} |p|^{3/2}
            \;dx dt
    \leq \epsilon(M)   
   \label{EQ16}
  \end{equation}
then the solution is regular at $(0,0)$.

Also, as a corollary of Theorem~\ref{thm1} we obtain a
stronger version of the Leray's regularity criterion concerning
weak solutions.
Namely, by \cite{G2,Le}, if
$T$ is an epoch of irregularity, then
for any $q>3$
there is a sufficiently small $\epsilon>0$ such that
$
 \|u(\cdot,t)\|_{L^q} 
\geq \fractext{\epsilon}{(T-t)^{(1-3/q)/2}}
$ for $t<T$ sufficiently close to $T$.
Recall that $T$ is an epoch of irregularity if $T$ is a singular time
for $u$, while the times $t<T$ sufficiently close to $T$ are regular.
In Corollary~\ref{cor12} we obtain that if $T>0$ is the first
singular time,
then
for all $q\ge3$
  \begin{equation*}
    \Vert (u_1,u_2)(\cdot,t)\Vert_{L^q}
    \geq 
    \frac{M}{(T-t)^{(1-3/q)/2}}   
  \end{equation*}
or
  \begin{equation*}
    \Vert u_3(\cdot,t)\Vert_{L^q}
    \geq 
    \frac{\epsilon(M)}{(T-t)^{(1-3/q)/2}},
  \end{equation*}
for $t<T$ sufficiently close to $T$. (A similar statement holds when
$T$ is an epoch of irregularity.)
Similarly, using Theorem~\ref{thm4}, we obtain Corollary~\ref{cor2} which states that 
if $T$ is the first singular time,
then
  \begin{equation*}
   \Vert u(\cdot,t)\Vert_{L^q}
   \geq 
   \frac{M}{(T-t)^{(1-3/q)/2}}
  \end{equation*}
or
  \begin{equation*}
    \Vert p(\cdot,t)\Vert_{L^{q/2}} 
    \geq 
    \frac{\epsilon(M)}{(T-t)^{1-3/q}}.
  \end{equation*}
for $t<T$ sufficiently close to $T$.

The paper is organized as follows. In the next section we state the
main results and introduce the notation used throughout the rest of the
paper. The proof is based on a contradiction argument and
Section~\ref{sec3} contains a regularity result for the limit
system, which turns out to be the Navier-Stokes system with $u_3 \equiv 0$. We would like to note that in order to prove Corollary~\ref{cor12} and Corollary~\ref{cor2} we require explicit estimates on the solutions of the considered limit system. Therefore, we cannot directly apply the results of Neustupa, Novotn{\'y} and Penel from \cite{NP,NNP}. Consequently, we need to modify this strategy to suit our needs.
The proofs of Theorems~\ref{thm1} and~\ref{thm5} are presented in
Section~\ref{sec4}, while
Section~\ref{sec5} contains the proofs
of Theorems~\ref{thm4} and~\ref{thm6}.

\section{The notation and the main results}\label{sec2}
\setcounter{equation}{0}

Let $D$ be an open, bounded, and connected subset of 
${\mathbb R}^{3}\times (0,\infty)$. 
We assume that $(u,p)$ is a suitable weak solution in
$D$, which means\\
(i) $u\in L_{t}^{\infty}L_{x}^{2}(D) \cap L_{t}^{2}H_{x}^{1}(D)$ and
$p\in L^{3/2}(D)$,\\
(ii) the Navier-Stokes equations
\eqref{NSE}
are satisfied in the weak sense, and\\
(iii) the local energy inequality holds in $D$, i.e.,
  \begin{align}
   & \left. \int_{\R^3} |u|^2  \phi\;dx  \right|_{T}
   +
   2\intint_{{\mathbb R}^{3}\times(-\infty,T]} |\nabla u|^2\phi\;dx dt
   \nonumber\\&\indeq\indeq
    \le
    \intint_{{\mathbb R}^{3}\times(-\infty,T]}
      \biggl(
       |u|^2(\partial_t\phi+\Delta \phi)
       + (|u|^2+2p) u\cdot \nabla\phi 
      \biggr)\;dx dt
  \end{align}
for all $\phi\in C_{0}^{\infty}(D)$ such that $\phi\ge0$ in $D$
and almost all $T\in {\mathbb R}$.

Recall the following scaling
property of the Navier-Stokes equation: If $(u(x,t),p(x,t))$
is a solution, then so is
$(\lambda u(\lambda x,\lambda^2 t), \lambda^2 p(\lambda x, \lambda^{2} t))$.

Let $(x_0,t_0) \in D$. Denote by $B_r(x_0)$ the  Euclidean ball in $\R^3$ with center at $x_0$ and radius~$r>0$; we abbreviate $B_r = B_r(0)$. By $Q_r(x_0,t_0) = B_r(x_0) \times [t_0-r^2,t_0]$ we denote the parabolic cylinder in $\R^4$ labeled by the top center point $(x_0,t_0) \in D$. The following is the main result of the paper.

\begin{Theorem}\label{thm1}
	Let $(u,p)$ be a suitable weak solution of \eqref{NSE} in a neighborhood of $\overline{Q_r(x_0,t_0)} \subset D$ which satisfies
	\begin{align}\label{bound1}
		\frac{1}{r^2}
		\int_{Q_r(x_0,t_0)} 
                   \bigl(
                      |u|^3 + |p|^{3/2}
                   \bigr)
                \;dx dt\leq M
           .
	\end{align}
Then there exists $\epsilon >0$ depending on $M$ such that if
	\begin{align}
		{\frac{1}{r^2} }
		\int_{Q_r(x_0,t_0)} |u_3|^3 \;dx dt\leq \epsilon
	\end{align}
	then $u$ is regular at $(x_0,t_0)$. 
\end{Theorem}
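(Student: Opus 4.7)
The plan is to argue by contradiction through a compactness argument, passing to a limiting suitable weak solution in which $u_3\equiv 0$, and then appealing to a regularity statement for this limit system.

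Suppose the theorem fails for some fixed $M$. Then there is a sequence of suitable weak solutions $(u^{(n)},p^{(n)})$ on neighborhoods of $\overline{Q_{r_n}(x_n,t_n)}$ satisfying \eqref{bound1} with constant $M$, with
$r_n^{-2}\int_{Q_{r_n}(x_n,t_n)} |u_3^{(n)}|^3\,dx\,dt \to 0$, and each singular at $(x_n,t_n)$. Using the parabolic scaling invariance of the Navier--Stokes system, I would translate and rescale to obtain solutions $(v^{(n)},q^{(n)})$ on $Q_1(0,0)$ satisfying $\int_{Q_1}(|v^{(n)}|^3+|q^{(n)}|^{3/2})\,dx\,dt \le M$, $\|v_3^{(n)}\|_{L^3(Q_1)}\to 0$, and all singular at $(0,0)$.

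Next, I would exploit the suitable-weak-solution structure to pass to a limit. The local energy inequality, tested against a cutoff supported in $Q_1$ and controlled by the uniform $L^3$-bound on $v^{(n)}$ and the uniform $L^{3/2}$-bound on $q^{(n)}$, yields uniform bounds for $v^{(n)}$ in $L^{\infty}_{t}L^{2}_{x}\cap L^{2}_{t}H^{1}_{x}$ on every interior subcylinder $Q_\rho\subset Q_1$. The momentum equation then controls $\partial_t v^{(n)}$ in a negative Sobolev norm, so by Aubin--Lions we extract a subsequence with $v^{(n)}\to v^*$ strongly in $L^{3}_{\mathrm{loc}}(Q_1)$, weakly in the energy space, while $q^{(n)}\rightharpoonup q^*$ in $L^{3/2}(Q_1)$. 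The strong $L^3$ convergence suffices to pass to the limit in the nonlinearity and in the local energy inequality, and $(v^*,q^*)$ is a suitable weak solution in $Q_1$. By construction $v_3^*\equiv 0$; the third momentum equation gives $\partial_3 q^*=0$, so $q^*$ depends only on $(x_1,x_2,t)$, while $(v_1^*,v_2^*)$ is horizontally divergence-free and its vertical vorticity $\omega_3^* = \partial_1 v_2^* - \partial_2 v_1^*$ solves the transport-diffusion equation
\begin{equation*}
\partial_t \omega_3^* - \Delta \omega_3^* + v_1^*\partial_1 \omega_3^* + v_2^*\partial_2 \omega_3^* = 0,
\end{equation*}
free of vortex-stretching. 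This is the limit system to which the regularity result of Section~\ref{sec3} applies, yielding smoothness of $v^*$ near $(0,0)$.

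To close the contradiction, I would apply Wolf's $\epsilon$-regularity criterion from \cite{W2}: since $v^*$ is bounded on some $Q_\rho$, we may shrink $\rho$ so that $\rho^{-2}\int_{Q_\rho}|v^*|^3\,dx\,dt < \epsilon_{\text{W}}/2$; strong $L^{3}_{\mathrm{loc}}$ convergence then gives $\rho^{-2}\int_{Q_\rho}|v^{(n)}|^3\,dx\,dt < \epsilon_{\text{W}}$ for $n$ large, hence $v^{(n)}$ is regular at $(0,0)$, contradicting the construction. The main obstacle is the regularity of the limit system: although the horizontal dynamics and the absence of vortex-stretching suggest a two-dimensional regularity theory, the unknowns still depend on $x_3$, so the argument does not reduce to a pure 2D Navier--Stokes theorem. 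Moreover, the quantitative dependence of the estimates on $M$ required to deduce Corollary~\ref{cor12} and Corollary~\ref{cor2} prevents a direct appeal to \cite{NP,NNP}, so the regularity of the limit system must be reworked with explicit, trackable constants.
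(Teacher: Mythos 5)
Your proposal is correct, and its front end --- the contradiction setup, the rescaling to $Q_1$, the energy/Aubin--Lions compactness giving strong $L^3_{\mathrm{loc}}$ convergence, the identification of the limit as the system \eqref{eqn_limit} with $v_3^*\equiv 0$ and $\partial_3 q^*=0$, and the appeal to the regularity theorem for that system --- coincides with the paper's argument (Lemma~\ref{lemma_conv}, Theorem~\ref{thm2}, and the first half of the proof of Theorem~\ref{thm5}). Where you genuinely diverge is in how the contradiction is closed. The paper proves the stronger Theorem~\ref{thm5}, whose conclusion \eqref{EQ20} requires smallness of the scale-invariant $L^{3/2}$ norm of the pressure on the smaller cylinder as well; to obtain it, the authors carry out the localized pressure decomposition \eqref{EQ34}--\eqref{EQ09}, estimate the singular and nonsingular pieces via Calder\'on--Zygmund and the Newtonian potential, and then invoke the CKN criterion. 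You instead invoke Wolf's velocity-only criterion from \cite{W2}, which needs no pressure information: once $v^*$ is bounded near the origin, $\rho^{-2}\int_{Q_\rho}|v^*|^3\,dx\,dt\to 0$ as $\rho\to0$, and strong $L^3_{\mathrm{loc}}$ convergence transfers the smallness to $v^{(n)}$. This is a legitimate and noticeably shorter route to Theorem~\ref{thm1} itself, at the cost of importing Wolf's (less elementary) $\epsilon$-regularity theorem and of not producing the quantitative decay \eqref{EQ20}, which is exactly what the paper needs later for Corollaries~\ref{cor12} and~\ref{cor2} and is the reason the authors go through the pressure estimates. Your closing remarks about the limit system --- that it is not literally two-dimensional since the unknowns still depend on $x_3$, and that explicit constants are needed for the corollaries so that \cite{NP,NNP} cannot be cited directly --- correctly identify the substantive content of Section~\ref{sec3}.
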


The above theorem follows from the following stronger statement.

\begin{Theorem}\label{thm5}
For all $M,\epsilon_0,r>0$
there exist constants $\epsilon(M,\epsilon_0)>0$ and 
$\kappa(M,\epsilon_0)\in(0,1)$ with the
following
property:
If $(u,p)$ is a suitable 
weak solution of \eqref{NSE} in 
${Q_r(x_0,t_0)} $ which satisfies 
  \begin{align}
		\frac{1}{r^2}
		\int_{Q_r(x_0,t_0)}  
	               (     |u|^3 
                             +  |p|^{3/2} )
                 	\;dx dt\leq M
   \label{EQ17}
  \end{align}
and 
  \begin{align}
		\frac{1}{r^2}
		\int_{Q_r(x_0,t_0)}  
                   |u_3|^{3}
                \; dx dt
            \le \epsilon
   \label{EQ18}
  \end{align}
then 
  \begin{align}
		\frac{1}{(\kappa r)^2}
		\int_{Q_{\kappa r}(x_0,t_0)}  
	               (     |u|^3 
                             +  |p|^{3/2} )
                 	\;dx dt\leq \epsilon_0
   \period
   \label{EQ20}
  \end{align}
\end{Theorem}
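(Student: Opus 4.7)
\textbf{Proof plan for Theorem \ref{thm5}.} By the scaling symmetry of the Navier--Stokes system, I would immediately reduce to the case $r=1$ and $(x_0,t_0)=(0,0)$, so that after the change of variables the hypotheses and conclusion involve only $Q_1$ and $Q_\kappa$. The proof then proceeds by contradiction: fix $M$ and $\epsilon_0>0$ and suppose that no pair $(\epsilon,\kappa)$ with the claimed property exists. Then for every $\kappa\in(0,1)$ I can find a sequence of suitable weak solutions $(u^n,p^n)$ on $Q_1$ with
\[
\int_{Q_1}\bigl(|u^n|^3+|p^n|^{3/2}\bigr)\,dx\,dt\le M
\qquad\text{and}\qquad
\int_{Q_1}|u_3^n|^3\,dx\,dt\to 0,
\]
yet
\[
\frac{1}{\kappa^2}\int_{Q_\kappa}\bigl(|u^n|^3+|p^n|^{3/2}\bigr)\,dx\,dt>\epsilon_0.
\]

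The central step is then a compactness argument. From the suitable-weak-solution hypothesis and the uniform $L^3L^3$ bound on $u^n$ and $L^{3/2}$ bound on $p^n$, together with the local energy inequality, I would derive uniform bounds on $u^n$ in $L^\infty_t L^2_x\cap L^2_tH^1_x$ on a slightly smaller cylinder, plus a uniform bound on $\partial_t u^n$ in some negative-order Sobolev space via the equation. An Aubin--Lions argument yields a subsequence with $u^n\to\bar u$ strongly in $L^3(Q_\rho)$ for some $\rho<1$, and by passing to the limit in the weak formulation and the local energy inequality one verifies that $\bar u$ is itself a suitable weak solution. The pressure limit $\bar p$ is recovered through the representation $p^n=(-\Delta)^{-1}\partial_i\partial_j(u_i^n u_j^n)+\text{harmonic part}$ on interior subdomains, using strong convergence of $u^n\otimes u^n$ to identify the singular part and a weak limit argument for the harmonic correction. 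Crucially, the assumption $\int_{Q_1}|u_3^n|^3\to 0$ forces $\bar u_3\equiv 0$ on $Q_\rho$.

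At this point the limit $(\bar u,\bar p)$ is a suitable weak solution whose third component vanishes identically, so the regularity theorem for this limit system from Section~\ref{sec3} applies, yielding in particular that $\bar u$ and $\bar p$ are bounded near the origin. Because $|Q_\kappa|\sim\kappa^5$, boundedness of $\bar u$ and $\bar p$ gives
\[
\frac{1}{\kappa^2}\int_{Q_\kappa}\bigl(|\bar u|^3+|\bar p|^{3/2}\bigr)\,dx\,dt\le C\kappa^3,
\]
which is smaller than $\epsilon_0/2$ once $\kappa=\kappa(M,\epsilon_0)$ is fixed sufficiently small. Using the strong convergence $u^n\to\bar u$ in $L^3(Q_\rho)$ and the corresponding convergence of $p^n\to\bar p$ in $L^{3/2}$ on $Q_\kappa$, I would pass to the limit to deduce that for $n$ large the left-hand side at scale $\kappa$ is less than $\epsilon_0$, contradicting the standing assumption.

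The main obstacle will be the passage to the limit for the pressure: unlike $u^n$, for which compactness is standard, $p^n$ is only bounded in $L^{3/2}$ and interior convergence requires either the harmonic/non-harmonic splitting mentioned above or the Wolf-type local pressure decomposition, and care must be taken that the limiting pressure is the one that pairs with $\bar u$ in the local energy inequality (so that the limit system is genuinely a suitable weak solution to which Section~\ref{sec3} applies). Once this compactness is in hand, the rest of the argument is a straightforward linearization-around-the-vanishing-third-component scheme.
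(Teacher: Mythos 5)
Your overall architecture is the same as the paper's: rescale to $r=1$, argue by contradiction, extract a strongly $L^3$-convergent subsequence via the local energy inequality and Aubin--Lions (this is Lemma~\ref{lemma_conv}), observe that the hypothesis forces the third component of the limit to vanish so that the limit solves the reduced system \eqref{eqn_limit}, invoke the regularity of that system (Theorem~\ref{thm2}, which gives an $L^6$ bound depending only on $M$) to get smallness of the scale-invariant $L^3$ quantity on $Q_{\kappa}$, and transfer that smallness back to $u^{(n)}$ by strong convergence. Two side remarks: you do not need the limit to be a \emph{suitable} weak solution, so the worry about the local energy inequality surviving the limit is moot --- Theorem~\ref{thm2} is proved by vorticity and Stokes estimates for weak solutions of \eqref{eqn_limit}, not by CKN; and since the eventual choice of $\kappa$ depends only on $M$ and $\epsilon_0$ (through the a priori bound $M_0$ for the limit system), you should fix $\kappa$ first and then take the contradicting sequence for that $\kappa$, so that the sequence does not depend on $\kappa$.

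The genuine gap is the pressure term at scale $\kappa$. You propose to conclude by ``passing to the limit'' in $\kappa^{-2}\int_{Q_\kappa}|p^{(n)}|^{3/2}$ using ``convergence of $p^{(n)}\to\bar p$ in $L^{3/2}$ on $Q_\kappa$.'' But all that is available is weak $L^{3/2}$ convergence, which gives lower semicontinuity of the norm --- the wrong inequality: it bounds $\|\bar p\|$ by $\liminf_n\|p^{(n)}\|$, not $\limsup_n\|p^{(n)}\|$ by $\|\bar p\|$. Strong interior $L^{3/2}$ convergence of the pressure is not available in general: in the splitting you mention, the Calder\'on--Zygmund part $R_iR_j(u_i^{(n)}u_j^{(n)})$ does converge strongly because $u^{(n)}\otimes u^{(n)}$ converges in $L^{3/2}$, but the harmonic correction, while smooth in $x$ with interior bounds, has no compactness in $t$ and may oscillate so that it converges only weakly; a ``weak limit argument'' for it cannot produce the required upper bound. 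The fix --- and this is exactly what the paper does in \eqref{EQ34}--\eqref{EQ11a} --- is to estimate $p^{(n)}$ on the small cylinder directly and uniformly in $n$, taking no limit in the pressure at all: the singular part is bounded by $\|u^{(n)}\|_{L^3(Q_{\kappa_1})}^2$, which is already small for large $n$ by the velocity argument, while the nonsingular/harmonic terms are bounded pointwise on the inner cylinder in terms of $\|p^{(n)}\|_{L^{3/2}(Q_1)}\le M^{2/3}$, so that the geometric factor coming from the small cylinder makes their normalized contribution smaller than $\epsilon_0$ independently of $n$, once the radii are chosen small. With that replacement your argument closes; as written, the final step for the pressure does not go through.
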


As a consequence of Theorem~\ref{thm5} we may deduce the following
improvement of a Leray's result from~\cite{Le}.

\begin{Corollary}\label{cor12}
Let $(u,p)$ be Leray's weak solution defined in a neighborhood
of $[0,T]$ with $T$ as the first singularity.
Then for every $M\ge1$ there exists
$\epsilon(M)\in(0,1]$ such that
  \begin{equation}
    \Vert (u_1,u_2)(\cdot,t)\Vert_{L^q}
    \geq 
    \frac{M}{(T-t)^{(1-3/q)/2}}   
  \end{equation}
or
  \begin{equation}\label{2.4}
    \Vert u_3(\cdot,t)\Vert_{L^q}
    \geq 
    \frac{\epsilon(M)}{(T-t)^{(1-3/q)/2}},
  \end{equation}
for all $t\in(T/2,T)$ and $q\ge3$.
\end{Corollary}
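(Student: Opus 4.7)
The plan is to argue by contradiction. Suppose the conclusion fails for some $M \geq 1$; then for every integer $n$ there exist $t_n \in (T/2, T)$ and $q_n \geq 3$ with
\begin{equation*}
  \|u_h(\cdot, t_n)\|_{L^{q_n}} < \frac{M}{(T - t_n)^{(1 - 3/q_n)/2}}
  \qquad\text{and}\qquad
  \|u_3(\cdot, t_n)\|_{L^{q_n}} < \frac{1/n}{(T - t_n)^{(1 - 3/q_n)/2}}.
\end{equation*}
Since $T$ is the first singular time, there exists at least one singular point $x_* \in \R^3$ at time~$T$. Setting $\lambda_n = (T - t_n)^{1/2}$, I would rescale around $(x_*, T)$ via $v_n(y, s) = \lambda_n u(x_* + \lambda_n y, T + \lambda_n^2 s)$ and $\pi_n(y, s) = \lambda_n^2 p(x_* + \lambda_n y, T + \lambda_n^2 s)$, so that $(v_n, \pi_n)$ is a suitable weak solution with a singularity inherited at $(0, 0)$, while the scaling of the spatial $L^{q}$ norm yields $\|v_{n,h}(\cdot, -1)\|_{L^{q_n}} \leq M$ and $\|v_{n,3}(\cdot, -1)\|_{L^{q_n}} \leq 1/n$.

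The central step is to verify the hypotheses of Theorem~\ref{thm5} for $(v_n, \pi_n)$ on the unit cylinder $Q_1(0, 0)$, with $\epsilon_0$ chosen small enough (say $\epsilon_0 \leq \epsilon_{\text{CKN}}$) that the conclusion of Theorem~\ref{thm5} triggers the Caffarelli--Kohn--Nirenberg regularity criterion. Short-time mild-solution theory for the Navier--Stokes equations with $L^q$ initial data, together with the associated pressure estimate, produces a uniform bound $\int_{Q_1}(|v_n|^3 + |\pi_n|^{3/2}) \leq M'$ depending only on $M$, while the same argument applied to the third component gives $\int_{Q_1} |v_{n,3}|^3 \leq \delta_n \to 0$. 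Theorem~\ref{thm5} then forces the scale-invariant quantity at $Q_{\kappa}(0, 0)$ below $\epsilon_0$, so by the CKN criterion $(0, 0)$ is a regular point of $v_n$ for $n$ large, contradicting the inherited singularity.

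The main difficulty is this propagation step: because $M$ may be arbitrarily large, the standard local existence window in $L^{q}$ is not immediately guaranteed to cover the entire interval $[-1, 0]$. The resolution uses the smallness of $v_{n,3}(\cdot, -1)$, which drives $v_n$ close to a solution of the limit system discussed in Section~\ref{sec3} (the Navier--Stokes equations with $u_3 \equiv 0$). The explicit regularity estimates for that limit system, emphasized in the introduction, provide the quantitative control that transfers back to $v_n$ for $n$ large and enables the uniform-in-$n$ verification of the Theorem~\ref{thm5} hypotheses, at which point the contradiction closes.
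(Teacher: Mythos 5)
Your overall architecture (argue by contradiction, rescale to unit scale, verify the hypotheses of Theorem~\ref{thm5}, conclude via CKN and Leray) matches the paper in outline, but the step on which everything hinges is not justified and, as stated, fails. To invoke Theorem~\ref{thm5} you need the \emph{space-time} bounds $r^{-2}\int_{Q_r}(|v_n|^3+|\pi_n|^{3/2})\le M'$ and $r^{-2}\int_{Q_r}|v_{n,3}|^3\to 0$ on a full parabolic cylinder, whereas your contradiction hypothesis hands you control of $u(\cdot,t_n)$ on a \emph{single} time slice only. Your proposed upgrade via ``short-time mild-solution theory with $L^q$ initial data'' does not deliver this: for $q>3$ the guaranteed existence time from an $L^q$ slice of size $M$ is of order $M^{-2q/(q-3)}$, which does not cover $[-1,0]$ when $M$ is large, and for $q=3$ the existence time is not controlled by the norm at all; moreover a one-time velocity bound yields no space-time $L^{3/2}$ bound on the pressure. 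The fallback you sketch --- that smallness of $v_{n,3}(\cdot,-1)$ places $v_n$ near the limit system of Section~\ref{sec3} --- is not available: the comparison with the limit system in this paper (Lemma~\ref{lemma_conv} together with Theorem~\ref{thm2}) operates on sequences of suitable weak solutions already uniformly bounded in space-time $L^3$, and there is no mechanism for propagating smallness of one component forward from a single instant without already knowing regularity. In effect your plan requires a single-time-slice $\varepsilon$-regularity theorem with one large component, which is far beyond the tools used here.

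The paper avoids this entirely by proving the contrapositive with the pointwise-in-time upper bounds assumed for \emph{all} $t\in(T/2,T)$: it integrates $\Vert (u_1,u_2)(\cdot,t)\Vert_{L^q}\le M(T-t)^{-(1-3/q)/2}$ and $\Vert u_3(\cdot,t)\Vert_{L^q}\le \epsilon(T-t)^{-(1-3/q)/2}$ over the cylinder $Q_{\sqrt T/2}(x_0,T)$ via H\"older's inequality, obtains the pressure bound from the Calder\'on--Zygmund estimate $\Vert p(\cdot,t)\Vert_{L^{q/2}}\le C\Vert u(\cdot,t)\Vert_{L^q}^2$ and integrates that as well, thereby verifying \eqref{EQ17}--\eqref{EQ18} for every center $x_0\in\R^3$ simultaneously; Theorem~\ref{thm5}, the CKN criterion, and Leray's regularity criterion then show $T$ is regular. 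Note that the effective content of the corollary is therefore the negation of ``both upper bounds hold for all $t$ near $T$,'' which is precisely what your single-$(t_n,q_n)$ negation does not supply. If you wish to keep the blow-up framing, you must first arrange the contradiction hypothesis so that it provides the bounds on a whole time interval; from one time slice the argument cannot be closed.
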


Note that for $q=3$ a stronger statement has been
established in \cite{ESS}.
Also, observe that the statement  extends to the case
when $T$ is an epoch of irregularity by translating and
rescaling the time variable.

We first prove the corollary, while the proofs of
the theorems are provided in Section~\ref{sec4}.

\begin{proof}[Proof of Corollary~\ref{cor12}]
Assume that $u$ is regular on $(0,T)$ and
  \begin{equation}
    \Vert (u_1,u_2)(\cdot,t)\Vert_{L^q}
     \leq
    \frac{M}{(T-t)^{(1-3/q)/2}}   
   \comma t\in(T/2,T)
   \label{EQ13}
  \end{equation}
and
  \begin{equation}
    \Vert u_3(\cdot,t)\Vert_{L^q}
    \leq
    \frac{\epsilon}{(T-t)^{(1-3/q)/2}}
   \comma t\in(T/2,T)
   \label{EQ14}
  \end{equation}
hold for some $M\ge1$ and
$\epsilon\in(0,M]$. We claim that $T$ is regular
if $\epsilon$  is sufficiently small.
The assumptions on the velocity and 
the Calder\'on-Zygmund theorem imply
  \begin{equation}
   \|p(\cdot,t)\|_{L^{q/2}} 
    \leq 
    \frac{C M^2}{(T-t)^{(1-3/q)}}   
   \period
   \label{EQ21}
  \end{equation}
Let $x_0\in{\mathbb R}^{3}$ be arbitrary.
Using H\"older's inequality, we 
get
  \begin{align}
   \Vert u_j\Vert_{L^3(Q_{\sqrt T/2}(x_0,T))}
   \le
   C (\sqrt T)^{5/3-5/q}
   \Vert u_j\Vert_{L^q(Q_{\sqrt T/2}(x_0,T))}
   \le
   C T^{1/3} M
   \comma j=1,2
   \label{EQ22}
  \end{align}
where we used \eqref{EQ13} in the last step.
Therefore,
  \begin{equation}
   \frac{1}{(\sqrt T/2)^{2/3}}
   \Vert u_j\Vert_{L^3(Q_{\sqrt T/2}(x_0,T))}
   \le
   C M
   \comma j=1,2
   \period
   \label{EQ23}
  \end{equation}
Similarly, \eqref{EQ14} implies
  \begin{equation}
   \frac{1}{(\sqrt T/2)^{2/3}}
   \Vert u_3\Vert_{L^3(Q_{\sqrt T/2}(x_0,T))}
   \le
   C \epsilon
   \label{EQ24}
  \end{equation}
while by \eqref{EQ21}
  \begin{equation}
   \frac{1}{(\sqrt T/2)^{2/3}}
   \Vert p\Vert_{L^{3/2}(Q_{\sqrt T/2}(x_0,T))}^{1/2}
   \le
   C M
   \period
   \label{EQ25}
  \end{equation}
By Theorem~\ref{thm5}, 
there exists $\kappa\in(0,1)$ so that
if $\epsilon>0$ is sufficiently small
  \begin{align*}
        \frac{1}{(\kappa \sqrt T/2)^2}
	\int_{Q_{\kappa \sqrt T/2}(x_0,T)}
            (|u|^3 + |p|^{3/2})\;dx dt \leq \epsilon_{\text{CKN}}
   \comma x_0\in{\mathbb R}^{3}   
   \period
  \end{align*}
Using the CKN theory, this provides a uniform bound for $u$ for $t$ in a neighborhood
of $T$. By Leray's regularity criterion, this shows that the time
$T$ is regular, as claimed.
\end{proof}

The strategy used in the proofs of Theorems~\ref{thm1} and~\ref{thm5}
enables us to prove the following two theorems. 

\begin{Theorem}\label{thm4}
For every $M>0$, there exists a constant $\epsilon(M)>0$ with the
following
property:
If $(u,p)$ is a suitable 
weak solution of \eqref{NSE} in a neighborhood of
$\overline{Q_r(x_0,t_0)} \subset D$ which satisfies 
  \begin{align}
		\frac{1}{r^2}
		\int_{Q_r(x_0,t_0)}  
                 |u|^{3}
                 	\;dx dt\leq M
   \label{EQ02}	
  \end{align}
and
  \begin{align}
		\frac{1}{r^2}
		\int_{Q_r(x_0,t_0)}  
	                    |p|^{3/2} 
                \; dx dt
            \le \epsilon,
   \label{EQ04}
  \end{align}
then $u$ is regular at $(x_0,t_0)$. 
\end{Theorem}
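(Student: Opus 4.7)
The strategy parallels that of Theorem~\ref{thm1}. The plan is to argue by contradiction: assuming the theorem fails, one produces a sequence of suitable weak solutions $(u^n,p^n)$ on rescaled cylinders of unit size whose velocity remains bounded in $L^3$ but whose pressure tends to zero in $L^{3/2}$, while the origin remains singular. One then passes to a limit $(u,0)$ whose pressure vanishes identically, and shows that this limit is regular at the origin, which yields a contradiction via the Caffarelli-Kohn-Nirenberg criterion.

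\textbf{Step 1 (contradiction and normalization).} Suppose the conclusion fails. Then there exists $M>0$ and a sequence of suitable weak solutions $(u^n,p^n)$ defined in neighborhoods of $\overline{Q_{r_n}(x_0^n,t_0^n)}$ satisfying
\begin{align*}
\frac{1}{r_n^2}\int_{Q_{r_n}(x_0^n,t_0^n)} |u^n|^3\,dx\,dt \leq M
\quad\text{and}\quad
\frac{1}{r_n^2}\int_{Q_{r_n}(x_0^n,t_0^n)} |p^n|^{3/2}\,dx\,dt \leq \frac{1}{n},
\end{align*}
yet for which $(x_0^n,t_0^n)$ is singular. Using the scaling invariance of \eqref{NSE}, translate and rescale to reduce to the case $r_n=1$ and $(x_0^n,t_0^n)=(0,0)$.

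\textbf{Step 2 (compactness).} The uniform $L^3$ bound on $u^n$, combined with the local energy inequality, yields uniform local bounds for $u^n$ in $L^\infty_t L^2_x \cap L^2_t H^1_x$. A standard Aubin-Lions argument applied to the equation then provides a subsequence and a limit $u$ such that $u^n\to u$ strongly in $L^3_{\text{loc}}(Q_1)$, while by hypothesis $p^n\to 0$ in $L^{3/2}(Q_1)$. The pair $(u,0)$ is a suitable weak solution of \eqref{NSE} on $Q_1$.

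\textbf{Step 3 (regularity of the limit system and conclusion).} The heart of the argument is to show that every suitable weak solution $(u,0)$ of \eqref{NSE} with vanishing pressure is regular at the origin, in analogy with the role played by Section~\ref{sec3} in the proof of Theorem~\ref{thm1}. Taking the divergence of the momentum equation and using $\div u=0$ gives the rigid structural identity $\partial_i\partial_j(u_iu_j)=0$, while $u$ itself satisfies the pressure-free equation $\partial_t u-\Delta u=-\sum_{j=1}^{3}\partial_j(u_j u)$. Starting from $u\in L^3(Q_1)$, a bootstrap based on parabolic regularity for the heat operator upgrades $u$ to higher integrability and ultimately yields interior boundedness. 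Once regularity of the limit is in hand, on a sufficiently small subcylinder $Q_\kappa$ the scale-invariant CKN quantity of $(u,0)$ falls below $\epsilon_{\text{CKN}}$; strong convergence then gives
\[
\frac{1}{\kappa^2}\int_{Q_\kappa} (|u^n|^3+|p^n|^{3/2})\,dx\,dt \le \epsilon_{\text{CKN}}
\]
for all large $n$, which by \cite{CKN} forces the origin to be regular for $(u^n,p^n)$, a contradiction. The main obstacle is the regularity of the limit system: as in the $u_3\equiv 0$ case, an off-the-shelf result is not directly available, and the bootstrap must be executed by hand in the pressure-free setting.
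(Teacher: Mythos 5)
Your overall architecture is the one the paper uses: argue by contradiction, rescale to unit cylinders, extract a strong $L^3$ limit via the compactness lemma, observe that the limit solves the pressure-free system, prove the limit is regular, and transfer smallness of the CKN quantity back to $(u^{(n)},p^{(n)})$ on a small subcylinder. (The paper actually proves the quantitative statement, Theorem~\ref{thm6}, first and deduces Theorem~\ref{thm4} from it, but your direct version of the same contradiction is fine for Theorem~\ref{thm4} itself.)

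The genuine gap is in your Step 3, which you yourself flag as ``the main obstacle'' but then resolve by asserting that ``a bootstrap based on parabolic regularity for the heat operator upgrades $u$ to higher integrability.'' Starting from $u\in L^3_{t,x}$ (or even $L^{10/3}_{t,x}$ from the energy class), such a Duhamel/heat-kernel bootstrap does not close: these norms are supercritical for a quadratic nonlinearity (the critical space is $L^5_{t,x}$), and if this bootstrap worked it would apply equally well to the full Navier--Stokes system and settle the regularity problem. What actually makes the limit system $\partial_t u-\Delta u+(u\cdot\nabla)u=0$, $\div u=0$, regular is its special structure: there is no pressure, so testing with $|u|^{q-2}u\,\eta^2$ makes the nonlinear term reduce, via $\int (u\cdot\nabla)(|u|^q)\eta^2=-\frac{2}{q}\int |u|^q\,\eta\, u\cdot\nabla\eta$, to a lower-order term (equivalently, $|u|^2$ is a subsolution of a drift-diffusion equation and obeys a maximum principle). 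This is exactly the mechanism the paper invokes, citing the known regularity of the viscous Burgers system \cite{C} and, alternatively, ``local $L^6$ estimates combined with the divergence-free condition'' --- the same cancellation used for $\omega_3$ in Lemma~\ref{omega3reg}. Your identity $\partial_i\partial_j(u_iu_j)=0$ is true for the limit but plays no role in the argument. To repair the proof, replace the heat-operator bootstrap with this $L^q$ energy estimate (or the maximum principle for $|u|^2$); the rest of your outline then goes through as written.
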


Although certain regularity criteria involving the pressure are known 
(cf.~\cite{BG} for instance), the condition for regularity \eqref{EQ04} appears to be new. Theorem~\ref{thm4} follows in fact from a stronger result stated in Theorem~\ref{thm6}.

\begin{Theorem}\label{thm6}
For all $M,\epsilon_0,r>0$
there exist constants $\epsilon(M,\epsilon_0)>0$ and $\kappa(M,\epsilon_0)\in(0,1)$ with the
following
property:
If $(u,p)$ is a suitable 
weak solution of \eqref{NSE} in 
${Q_r(x_0,t_0)} $ which satisfies 
  \begin{align}
		\frac{1}{r^2}
		\int_{Q_r(x_0,t_0)}  
                     |u|^{3}
                 	\;dx dt\leq M
   \label{EQ17a}
  \end{align}
and
  \begin{align}
		\frac{1}{r^2}
		\int_{Q_r(x_0,t_0)}  
	                    |p|^{3/2} 
                \; dx dt
            \le \epsilon,
   \label{EQ19}
  \end{align}
then 
  \begin{align}
		\frac{1}{(\kappa r)^2}
		\int_{Q_{\kappa r}(x_0,t_0)}  
	               (     |u|^3 
                             +  |p|^{3/2} )
                 	\;dx dt\leq \epsilon_0
   \period
   \label{EQ20a}
  \end{align}
\end{Theorem}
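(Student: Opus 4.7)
The strategy is a compactness--contradiction argument paralleling the proof of Theorem~\ref{thm5}, with the smallness assumption on $u_3$ replaced by smallness of $p$. By the scaling invariance of \eqref{NSE}, reduce to $r = 1$ and $(x_0, t_0) = (0, 0)$. The core auxiliary result, to be established in Section~\ref{sec5} analogously to the Section~\ref{sec3} regularity statement for the $u_3 \equiv 0$ limit, is the following: every suitable weak solution $v$ on $Q_1$ of the pressure-free Navier--Stokes system
\begin{equation*}
  \partial_t v - \Delta v + (v \cdot \nabla) v = 0, \qquad \div v = 0,
\end{equation*}
satisfying $\int_{Q_1}|v|^3\,dx\,dt \leq M$ is bounded on $Q_{1/2}$ by a constant $C(M)$ depending only on $M$.

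Accepting this, fix $\kappa = \kappa(M,\epsilon_0) \in (0,1)$ small enough that $C(M)^3\kappa^3 \leq \epsilon_0/2$. Suppose, for contradiction, that no $\epsilon(M,\epsilon_0) > 0$ works for this $\kappa$. Then there exist suitable weak solutions $(u_n,p_n)$ on $Q_1$ with $\int_{Q_1}|u_n|^3 \leq M$, $\int_{Q_1}|p_n|^{3/2} \to 0$, and $\kappa^{-2}\int_{Q_\kappa}(|u_n|^3 + |p_n|^{3/2}) > \epsilon_0$ for every $n$. From the local energy inequality, the uniform $L^3$ bound on $u_n$, and the vanishing of $p_n$ in $L^{3/2}(Q_1)$, one derives a uniform bound on $\nabla u_n$ in $L^2(Q_\rho)$ for every $\rho < 1$. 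An Aubin--Lions argument then yields, along a subsequence, $u_n \to u$ strongly in $L^3_{\mathrm{loc}}(Q_1)$, $\nabla u_n \rightharpoonup \nabla u$ weakly in $L^2_{\mathrm{loc}}(Q_1)$, and $p_n \to 0$ in $L^{3/2}(Q_1)$. Passing to the limit in the weak formulation and in the local energy inequality, $(u,0)$ is a suitable weak solution of the pressure-free system on $Q_1$ with $\int_{Q_1}|u|^3 \leq M$.

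Applying the limit-system regularity gives $\|u\|_{L^\infty(Q_{1/2})} \leq C(M)$, whence $\kappa^{-2}\int_{Q_\kappa}|u|^3 \leq C(M)^3\kappa^3 \leq \epsilon_0/2$. Strong $L^3$ convergence on $Q_\kappa$ gives $\kappa^{-2}\int_{Q_\kappa}|u_n|^3 \to \kappa^{-2}\int_{Q_\kappa}|u|^3 \leq \epsilon_0/2$, while $p_n \to 0$ in $L^{3/2}$ yields $\kappa^{-2}\int_{Q_\kappa}|p_n|^{3/2} \to 0$; hence for all large $n$
\begin{equation*}
  \kappa^{-2}\int_{Q_\kappa}(|u_n|^3 + |p_n|^{3/2})\,dx\,dt \leq \epsilon_0,
\end{equation*}
contradicting our standing assumption and completing the proof.

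The principal obstacle is the regularity of the pressure-free Navier--Stokes limit system. Unlike the $u_3 \equiv 0$ limit of Theorem~\ref{thm5}, it does not reduce to a lower-dimensional problem; however, the absence of the pressure in the local energy inequality substantially simplifies the estimates, and the divergence of the momentum equation produces the pointwise constraint $\partial_i u_j\,\partial_j u_i = 0$. These observations should permit one to run CKN-style decay estimates on shrinking cylinders without the troublesome pressure terms, yielding the desired $L^\infty$ bound.
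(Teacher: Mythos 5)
Your proposal is correct and follows essentially the same route as the paper: a contradiction/compactness argument in which the limit is a suitable weak solution of the pressure-free (viscous Burgers) system, whose interior regularity gives smallness of $\kappa^{-2}\int_{Q_\kappa}|u|^3$ via H\"older's inequality, transferred back to $u_n$ by strong $L^3$ convergence while $p_n\to 0$ in $L^{3/2}$ disposes of the pressure term. The one piece you defer --- regularity of the pressure-free limit system under an $L^3$ bound --- is exactly the step the paper also does not prove from scratch, instead citing Constantin's regularity result for that system (or local $L^6$ estimates combined with the divergence-free condition).
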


As a consequence of Theorems~\ref{thm4} and~\ref{thm6} we deduce 
the following.

\begin{Corollary}\label{cor2}
Let $(u,p)$ be Leray's weak solution defined in a neighborhood
of $[0,T]$ with $T$ as the first singularity. 
Then for every $M \geq 1$ there exists $\epsilon(M) \in (0,1]$ such that
  \begin{equation*}
    \|u(\cdot,t)\|_{L^q} 
    \geq 
    \frac{M}{(T-t)^{(1-3/q)/2}}   
  \end{equation*}
or
  \begin{equation*}
     \|p(\cdot,t)\|_{L^{q/2}} 
    \geq 
    \frac{\epsilon(M)}{(T-t)^{(1-3/q)/2}},
  \end{equation*}
for all $t \in (T/2,T)$ and $q \geq 3$. 

\end{Corollary}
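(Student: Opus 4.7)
The plan is to mimic the contradiction argument used in the proof of Corollary~\ref{cor12}, with Theorem~\ref{thm6} playing the role there occupied by Theorem~\ref{thm5}. Assuming for contradiction that $u$ is regular on $(0,T)$ and that both
\begin{equation*}
\|u(\cdot,t)\|_{L^q}\le\frac{M}{(T-t)^{(1-3/q)/2}},\qquad \|p(\cdot,t)\|_{L^{q/2}}\le\frac{\epsilon}{(T-t)^{(1-3/q)/2}}
\end{equation*}
hold for $t\in(T/2,T)$ and some $M\ge1$, $\epsilon\in(0,M]$, the goal will be to show that for $\epsilon>0$ sufficiently small the time $T$ must in fact be regular, contradicting the hypothesis that $T$ is the first singular time.

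Next I would fix an arbitrary $x_0\in\mathbb R^3$, set $r=\sqrt{T}/2$, and apply H\"older's inequality in space and time on the cylinder $Q_r(x_0,T)$ in exactly the fashion of \eqref{EQ22}--\eqref{EQ23} in the proof of Corollary~\ref{cor12}. This yields the scale-invariant estimate $r^{-2}\int_{Q_r(x_0,T)}|u|^3\,dx\,dt\le CM^3$. Running the same H\"older strategy for the pressure---with exponents $3/2$ and $q/2$ replacing $3$ and $q$, and noting that the relevant temporal integral $\int_{T-r^2}^{T}(T-t)^{-3(1-3/q)/4}\,dt$ converges for every $q\ge 3$---produces $r^{-2}\int_{Q_r(x_0,T)}|p|^{3/2}\,dx\,dt\le C\epsilon^{3/2}$. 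These are precisely the hypotheses \eqref{EQ17a} and \eqref{EQ19} of Theorem~\ref{thm6}, with $CM^3$ playing the role of $M$ and $C\epsilon^{3/2}$ playing the role of $\epsilon$.

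With those two uniform estimates secured, I would invoke Theorem~\ref{thm6} with the target $\epsilon_0=\epsilon_{\mathrm{CKN}}$. This furnishes constants $\kappa(M)\in(0,1)$ and $\epsilon_1(M)>0$ such that, once $\epsilon$ is taken small enough to enforce $C\epsilon^{3/2}\le\epsilon_1(M)$, the conclusion
\begin{equation*}
\frac{1}{(\kappa r)^2}\int_{Q_{\kappa r}(x_0,T)}(|u|^3+|p|^{3/2})\,dx\,dt\le\epsilon_{\mathrm{CKN}}
\end{equation*}
holds uniformly over $x_0\in\mathbb R^3$. The Caffarelli--Kohn--Nirenberg partial regularity theorem then produces a uniform $L^\infty$ bound on $u$ in a full spatial neighborhood of the slice $t=T$, and Leray's regularity criterion upgrades this to regularity of $u$ at time $T$, contradicting the assumption that $T$ is singular.

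The only delicate bookkeeping point is the H\"older estimate for the pressure, where one must check that the temporal exponent coming from $\|p(\cdot,t)\|_{L^{q/2}}\lesssim(T-t)^{-(1-3/q)/2}$ remains integrable after being raised to the $3/2$ power; since the resulting exponent $3(1-3/q)/4$ is strictly less than $3/4$ for every $q\ge 3$, this causes no difficulty, and the implicit constant depends only on $T$ and $q$. The remainder of the argument is a direct transcription of the corresponding step in the proof of Corollary~\ref{cor12}.
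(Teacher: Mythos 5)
Your proposal is correct and follows essentially the same route as the paper: assume for contradiction that both reversed bounds hold, use H\"older's inequality on $Q_{\sqrt{T}/2}(x_0,T)$ to verify the scale-invariant hypotheses \eqref{EQ17a} and \eqref{EQ19} uniformly in $x_0$, then apply Theorem~\ref{thm6} with $\epsilon_0=\epsilon_{\mathrm{CKN}}$ and conclude via the CKN criterion and Leray's regularity criterion. Your explicit check that the temporal exponent $3(1-3/q)/4<3/4$ keeps the pressure integral finite is a detail the paper leaves implicit, but it changes nothing in the argument.
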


\begin{proof}[Proof of Corollary~\ref{cor2}]
Assume that $u$ is regular on $(0,T)$ and 
  \begin{equation}
    \Vert u(\cdot,t)\Vert_{L^q}
     \leq
    \frac{M}{(T-t)^{(1-3/q)/2}}   
   \comma t\in(T/2,T)
   \label{EQ13a}
  \end{equation}
and
  \begin{equation}
    \Vert p(\cdot,t)\Vert_{L^{q/2}}
    \leq
    \frac{\epsilon}{(T-t)^{(1-3/q)/2}}
   \comma t\in(T/2,T)
   \label{EQ14a}
  \end{equation}
hold for some $M\ge1$ and
$\epsilon\in(0,M]$. We claim that $T$ is regular
if $\epsilon$  is sufficiently small.
Let $x_0 \in \mathbb{R}^3$ be arbitrary. Using H\"older's inequality, we obtain
	\begin{align}
		\|u\|_{L^3(Q_{\sqrt{T}/2}(x_0,T))} 
         	\leq 
		C(\sqrt{T})^{5/3-5/q} 
		\leq
		CT^{1/3}M
         \end{align}
where we used (\ref{EQ13a}) in the last step. Thus, we get
  \begin{equation}
   \frac{1}{(\sqrt T/2)^{2/3}}
   \Vert u\Vert_{L^3(Q_{\sqrt T/2}(x_0,T))}
   \le
   C M
   \period
   \label{EQ23a}
  \end{equation}
Similarly, by \eqref{EQ14a} we have
  \begin{equation}
   \frac{1}{(\sqrt T/2)^{2/3}}
   \Vert p\Vert_{L^{3/2}(Q_{\sqrt T/2}(x_0,T))}^{1/2}
   \le
   C \epsilon
   \period
  \end{equation}
By Theorem~\ref{thm6}, 
there exists $\kappa\in(0,1)$ so that
if $\epsilon>0$ is sufficiently small
  \begin{align*}
        \frac{1}{(\kappa \sqrt T/2)^2}
	\int_{Q_{\kappa \sqrt T/2}(x_0,T)}
            (|u|^3 + |p|^{3/2})\;dx dt \leq \epsilon_{\text{CKN}}
   \comma x_0\in{\mathbb R}^{3}   
   \period
  \end{align*}
This provides a uniform bound for $u$ for $t$ in a neighborhood
of $T$. By Leray's regularity criterion, this shows that the time
$T$ is regular, as claimed.
\end{proof}


\section{The limit system}\label{sec3}
\setcounter{equation}{0}

Let $D \subset {\mathbb R}^{3} \times (0,\infty)$ be a domain. 
Consider the system 
\begin{align}\label{eqn_limit}
	 \partial_t u_i - \Delta u_i + \sum_{j=1}^2 u_j \partial_j u_i + \partial_i p & =0 \indeq \text{ in $D$}
     \comma i=1,2 \nonumber \\
	 \partial_3 p& =0 \indeq \text{ in $D$} \nonumber \\
	 \partial_1 u_1 + \partial_2 u_2& =0 \indeq \text{ in $D$}
\end{align}
where $u(x_1,x_2,x_3,t)$ and $p(x_1,x_2,x_3,t)$ are unknown.
Note that the system \eqref{eqn_limit} stems from the Navier-Stokes equations by setting $u_3 = 0$. 

Denote by $\mathcal{S}(u)$ the set of points where the solution $u(x,t)$ of \eqref{eqn_limit} is singular.
(The definition for a regular/singular point is the same as the one for the Navier-Stokes system.)
Therefore, we may conclude that the set $\mathcal{S}(u)$ is closed in $D$ and the partial regularity results regarding the Navier-Stokes equations imply that its $1$-dimensional parabolic measure (and as a consequence its $1$-dimensional Hausdorff measure) is equal to zero. 

The following theorem, addressing regularity of the limiting system \eqref{eqn_limit}, 
is the main result of this section.

\begin{Theorem}\label{thm2}
Let $(u,p)$ be a 
weak
solution of (\ref{eqn_limit}). Then $u$ is regular. 
\end{Theorem}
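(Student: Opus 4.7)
The plan is to exploit the quasi-two-dimensional structure of the limit system by working with the scalar vorticity $\omega = \partial_1 u_2 - \partial_2 u_1$. Applying $\partial_1$ to the equation for $u_2$, $\partial_2$ to the equation for $u_1$, and subtracting, the pressure gradient cancels, and the constraint $\partial_1 u_1 + \partial_2 u_2 = 0$ collapses the quadratic terms so that
\begin{equation*}
\partial_t \omega - \Delta \omega + u_1 \partial_1 \omega + u_2 \partial_2 \omega = 0\period
\end{equation*}
This is a uniformly parabolic scalar equation with the full three-dimensional Laplacian, driven by the horizontally divergence-free field $u_h = (u_1,u_2)$.

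Next I would extract local $L^p$ bounds for $\omega$. Testing against $|\omega|^{p-2}\omega \phi^2$ for a smooth space-time cutoff $\phi$, the horizontal incompressibility of $u_h$ reduces the drift term to a commutator supported on $\nabla \phi$, and a Caccioppoli-type energy argument closes. Starting from $\omega \in L^2_{t,x}$ locally (inherited from the energy inequality for $u_h$) and running a Moser-style iteration, one obtains $\omega \in L^\infty_t L^p_x$ locally for every $p<\infty$. Slice by slice (at fixed $x_3$ and $t$), the two-dimensional Biot--Savart law recovers the horizontal gradient of $u_h$ with $\|\nabla_h u_h(\cdot,x_3,t)\|_{L^p_h} \leq C\|\omega(\cdot,x_3,t)\|_{L^p_h}$, the harmonic component of $u_h$ on each slice being controlled by the global energy bound; integrating in $x_3$ gives $\nabla_h u_h \in L^\infty_{t,\text{loc}} L^p_x$. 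For the vertical derivatives $w_i = \partial_3 u_i$, differentiating the momentum equation in $x_3$ and using $\partial_3 p = 0$ yields
\begin{equation*}
\partial_t w_i - \Delta w_i + u_j \partial_j w_i + w_j \partial_j u_i = 0\comma i=1,2
\end{equation*}
(sum over $j=1,2$); once $\nabla_h u_h$ lies in a high $L^p$, the stretching term is benign and a further energy estimate yields $\partial_3 u_h$ in a sufficiently integrable class.

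At this point $u_h$ lies locally in a subcritical Serrin--Prodi--Ladyzhenskaya class, and a standard bootstrap on the linear horizontal equation (now viewed as an advection-diffusion with good drift) upgrades $u_h$ to a smooth solution; since $u_3 \equiv 0$, this proves the regularity of $u$. The main obstacle is that we only have a local weak solution, so no pointwise maximum principle on $\omega$ is available at the outset; the $L^p$ estimates must be produced via a localized iterative scheme, coupled with the slice-wise Biot--Savart reconstruction of the drift $u_h$ itself, with the vorticity and velocity estimates upgraded in tandem until the integration by parts that eliminates the drift term is fully justified. A secondary technical point is the treatment of the two-dimensional harmonic piece of $u_h$ on each horizontal slice, which must be separately bounded from the ambient energy estimate in order to close the reconstruction.
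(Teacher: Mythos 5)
Your first step coincides exactly with the paper's Lemma~\ref{omega3reg}: the cancellation $(\partial_1u_1+\partial_2u_2)\,\omega_3=0$ gives the pure drift--diffusion equation for $\omega_3$, and the localized testing with $|\omega_3|^{q-2}\omega_3\eta^2$ plus a Moser-type iteration yields $|\omega_3|^{q/2}\in L^\infty_tL^2_x\cap L^2_tH^1_x$ locally for every $q$. After that the routes diverge. The paper does \emph{not} reconstruct $\nabla_h u_h$ from $\omega_3$; instead it uses the two-dimensional identity $(u\cdot\nabla_h)u=\omega_3u^\perp+\nabla_h(|u|^2/2)$ to rewrite \eqref{eqn_limit} as the Stokes system \eqref{eqn_stokes} with modified pressure $p+|u|^2/2$, localizes with a Bogovskii-type divergence corrector $V=\mathcal{R}(\nabla\eta\cdot u)$ (Lemma~\ref{lemma3}), checks that the forcing $\omega_3v^\perp+\dots$ lies in $L^{5/2}_tL^{15/11}_x$ using only $\omega_3\in L^p$ and $u$ in the energy/interpolation class $L^5_tL^{30/11}_x$, and invokes Stokes maximal regularity to land in the critical Serrin class $L^{5/2}_tL^{15}_x$. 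This avoids both the slice-wise Biot--Savart reconstruction and any equation for $\partial_3u_h$.

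Your route has a concrete gap precisely at the two places it departs from this. First, the slice-wise div--curl estimate on a disk $B'\Subset B$ reads
\begin{equation*}
\Vert\nabla_h u_h(\cdot,x_3,t)\Vert_{L^p(B')}\le C\Vert\omega_3(\cdot,x_3,t)\Vert_{L^p(B)}+C\Vert u_h(\cdot,x_3,t)\Vert_{L^1(B)},
\end{equation*}
and while the first term raised to the power $p$ and integrated in $x_3$ is controlled by the Moser iteration, the harmonic remainder is controlled, as a function of $x_3$, only in $L^2_{x_3}$ (from $u\in L^\infty_tL^2_x$) or $L^{10/3}_{t,x_3}$ (from $u\in L^{10/3}_{t,x}$). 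So the claimed conclusion $\nabla_h u_h\in L^\infty_{t,\mathrm{loc}}L^p_x$ for all $p$ does not follow; you only get a mixed norm of the type $L^\infty_t(L^p_hL^2_{x_3})$, with the vertical direction stuck at the energy level. Your subsequent energy estimate for $w=\partial_3u_h$ (the stretching term $\int|w|^2|\nabla_hu_h|$) and the final Serrin bootstrap both invoke the stronger isotropic bound, so they do not close as written; they would have to be redone with anisotropic interpolation (absorbing the bad $x_3$-integrability of the harmonic piece via $\nabla w\in L^2$), and the differentiation in $x_3$ of a merely weak solution must itself be justified by difference quotients. Second, the concluding ``advection--diffusion with good drift'' bootstrap ignores the pressure gradient $\nabla_hp$, which is still present in the horizontal momentum equation and for which you only have $p\in L^{3/2}$; some Stokes-type or local Serrin-with-pressure argument is unavoidable here, which is exactly the machinery the paper sets up. None of these defects looks irreparable, but as stated the proof has a hole between the vorticity estimate and the Serrin class.
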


We note that the results of Neustupa, Novotn{\'y} and Penel from \cite{NP, NNP} are not directly applicable in the considered setting since the weak solutions do not a priori have enough regularity to justify this approach. Moreover, in order to prove Corollary~\ref{cor12} and Corollary~\ref{cor2} we require explicit estimates on the weak solution of the system \eqref{eqn_limit} which cannot be obtained using the strategy from \cite{NP,NNP}. In particular, the presented proofs do not take advantage of epochs of irregularity. 


The first step toward the proof of Theorem~\ref{thm2}, namely establishing
the regularity of the third component of the
vorticity $\omega_3=\partial_{1}u_2-\partial_{2}u_1$ stems however from the work of Neustupa, Novotn{\'y} and Penel mentioned above. 

\begin{Lemma}\label{omega3reg}
	Let $(x_0,t_0) \in D$ and let $r>0$ be such that $Q_r(x_0,t_0) \subset D$. Then
	\begin{align}
		|\omega_3|^{q/2} \in L^\infty((t_0 - \rho^2,t_0),L^2(B_{\rho}(x_0))) \cap L^2((t_0-\rho^2,t_0), H^1(B_\rho(x_0)))
	\end{align}	
	for any $q \in [2,\infty)$ and $\rho \in (0,r/2)$.
\end{Lemma}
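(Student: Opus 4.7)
The plan is to derive a pointwise evolution equation for $\omega_3$, establish a local Caccioppoli-type estimate at level $q$, and then close by a Moser-type iteration based on parabolic Sobolev embedding.

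First, I apply $\partial_1$ to the $i=2$ equation of~\eqref{eqn_limit} and subtract $\partial_2$ applied to the $i=1$ equation. The pressure terms cancel identically, and expanding the nonlinear contributions and using $\partial_1 u_1 + \partial_2 u_2 = 0$ --- which eliminates the vortex-stretching terms that obstruct analogous estimates in the fully three-dimensional setting --- I obtain, in the weak sense on $D$,
\begin{equation*}
  \partial_t \omega_3 - \Delta \omega_3 + u_1 \partial_1 \omega_3 + u_2 \partial_2 \omega_3 = 0.
\end{equation*}
Thus $\omega_3$ satisfies a pure advection-diffusion equation driven by $u_h := (u_1, u_2)$, with the full three-dimensional Laplacian and a drift that is divergence-free in the horizontal variables.

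Second, I perform a local $L^q$ energy estimate. Fix $\rho \in (0, r/2)$ and a cutoff $\phi \ge 0$ compactly supported in $Q_{r/2+\rho/2}(x_0,t_0)$ with $\phi \equiv 1$ on $Q_\rho(x_0,t_0)$. Multiplying the vorticity equation by $|\omega_3|^{q-2}\omega_3\phi^2$ and integrating, the diffusion produces the coercive quantity $\tfrac{4(q-1)}{q^2}\intint |\nabla(|\omega_3|^{q/2})|^2 \phi^2$, up to a cross term absorbed by Young's inequality. The drift term, after integration by parts and the identity $\partial_1 u_1 + \partial_2 u_2 = 0$, reduces to
\begin{equation*}
  -\frac{1}{q}\intint |\omega_3|^q \, u_h \cdot \nabla_h(\phi^2) \, dx\, dt,
\end{equation*}
supported where $\nabla\phi \neq 0$. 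Combining with the time-derivative term yields a Caccioppoli-type bound of the schematic form
\begin{equation*}
  \sup_t \int |\omega_3|^q \phi^2 \, dx + \intint \bigl|\nabla(|\omega_3|^{q/2})\bigr|^2 \phi^2 \, dx\, dt \le C \intint |\omega_3|^q \bigl( |\partial_t \phi| + |\nabla\phi|^2 + |u_h||\nabla\phi|\phi \bigr)\, dx\, dt.
\end{equation*}

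Third, I bootstrap via Moser iteration on a decreasing sequence of cylinders. Since $u \in L^\infty_t L^2_x \cap L^2_t H^1_x$ on $Q_r$, parabolic Gagliardo--Nirenberg gives $u_h \in L^{10/3}(Q_r)$. At the base case $q=2$ one has $\omega_3 \in L^2(Q_r)$ directly from $u \in L^2_t H^1_x$; H\"older combined with the interpolation $\|f\|_{L^{20/7}} \le \|f\|_{L^2}^{1/4}\|f\|_{L^{10/3}}^{3/4}$ applied to $f = \omega_3\phi$ and then with Young's inequality absorbs the drift contribution into the left-hand side, giving $\omega_3 \in L^\infty_{\mathrm{loc}} L^2 \cap L^2_{\mathrm{loc}} H^1$. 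At higher levels, the parabolic embedding applied to $|\omega_3|^{q/2}\phi$ on the left-hand side upgrades the local integrability of $\omega_3$ by a definite factor at each step; iterating finitely many times on a sequence of radii converging to $\rho$ yields the conclusion for any prescribed $q \in [2,\infty)$.

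The main obstacle is controlling the drift term throughout the bootstrap: one must choose H\"older exponents so that $\intint |\omega_3|^q |u_h|\phi|\nabla\phi|$ is absorbed into a small fraction of the coercive left-hand side plus a quantity controlled by the previous iteration step, with constants that depend on $q$ but remain finite. This is precisely where the horizontal divergence-free condition $\partial_1 u_1 + \partial_2 u_2 = 0$ is essential: without it, the integration by parts producing the boundary-type contribution above would leave a volume term involving $\nabla \cdot u_h$ that could not be absorbed.
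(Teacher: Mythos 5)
Your proposal is correct and follows essentially the same route as the paper: the same cancellation of the stretching term via $\partial_1u_1+\partial_2u_2=0$ yielding the pure advection--diffusion equation for $\omega_3$, the same test function $|\omega_3|^{q-2}\omega_3\phi^2$, the same treatment of the drift by integration by parts, and the same $L^{10/3}$--$L^{20/7}$ H\"older/interpolation bookkeeping driving a Moser-type iteration (the paper iterates with exponent ratio $7/6$ on halved radii and finishes with a covering argument, while you shrink radii toward $\rho$ directly; this is an immaterial difference). No gaps.
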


\begin{proof}
Applying the curl operator to the system \eqref{eqn_limit} we note that $\omega_3$ satisfies the equation
\begin{align}\label{eqn_curl}
	\partial_t \omega_3 - \Delta \omega_3 + \sum_{j=1}^2 u_j \partial_j \omega_3 =0. 
\end{align}
Without loss of generality we may assume that $(x_0,t_0) = (0,0)$. We denote $B_r=B_r(x_0) $ and $Q_r=Q_r(x_0,t_0)$.
Let $\eta$ be a smooth non-negative cut-off function, supported on $Q_{r}$, $\eta \equiv 1$ on $Q_{r/2}$ and such that $\eta$ vanishes on the lateral boundary of $Q_{r}$, that is $\eta =0$ on $B_{r} \times \{-r^2\} \cup \partial B_{r} \times (-r^2,0)$. 
Fix $q \geq 2$.
Multiplying the equation~\eqref{eqn_curl} by 
$|\omega_3|^{q-2}\omega_3\eta^2$ and integrating over $Q_{r}$, we obtain the estimate
\begin{align}
   &\sup_{-r^2/4 \leq t \leq 0} \int_{B_{r/2}} |\omega_3|^q \eta^2 \;dx + \iint_{Q_{r/2}} |\nabla (|\omega_3|^{q/2})|^2\eta^2 
   \nonumber\\&\indeq
    \leq C(q) \iint_{Q_{r}}|\omega_3|^q (\eta |\Delta \eta| + |\nabla \eta|^2 + \eta |\partial_t \eta|)\;dxdt  
	  + C(q) \iint_{Q_{r}}|u||\omega_3|^q \eta |\nabla \eta|\;dxdt,
\label{loc_q1}
\end{align}
where the second term on the right has been obtained from
\begin{align*}
	& - \sum_{j=1}^3 \iint_{Q_{r}} u_j \partial_j \omega_3 |\omega_3|^{q-2}\omega_3 \eta^2 \;dxdt   
     = -\frac{1}{q} \sum_{j=1}^3 \iint_{Q_{r}} u_j \partial_j (|\omega_3|^q)\eta^2 \;dxdt 
     \nonumber\\&\indeq
         \indeq = \frac{2}{q} \sum_{j=1}^3 \iint_{Q_{r}} u_j |\omega_3|^q \eta \partial_j \eta \;dxdt
	 \leq C \iint_{Q_{r}} |u| |\omega_3|^q \eta |\nabla \eta|\;dxdt
\end{align*}	
using integration by parts and the divergence-free condition in the
second step.
This can be formally justified using a suitable mollification and passage to the limit. The estimate (\ref{loc_q1}) yields
\begin{align}\label{loc_q2}
    &\||\omega_3|^{q/2}\|^2_{L^\infty_t L^2_x(Q_{r/2})} + \|\nabla(|\omega_3|^{q/2})\|^2_{L^2(Q_{r/2})} 
    \nonumber\\&\indeq
   \leq C(q)
\bigl(
\|u\|_{L^{10/3}(Q_{r})}\||\omega_3|^{q/2}\|_{L^{20/7}(Q_{r})}^2 + \||\omega_3|^{q/2}\|_{L^2(Q_{r})}^2
\bigr)
.
\end{align}
By the Sobolev embedding and interpolation, we obtain from (\ref{loc_q2}) 
\begin{align}
	\||\omega_3|^{q/2}\|_{L^{10/3}(Q_{r/2})} \leq C(q)\|u\|_{L^{10/3}(Q_{r})}^{1/2}\||\omega_3|^{q/2}\|_{L^{20/7}(Q_{r})} + C(q) \||\omega_3|^{q/2}\|_{L^2(Q_{r})}.
\end{align}
Since $2 < 20/7 < 10/3$ we may bootstrap the estimate. Namely, from~(\ref{loc_q2}) we obtain
\begin{align}\label{loc_q3}
	\||\omega_3|^{(7/6)q/2}\|_{L^{20/7}(Q_{r/2})}^{7/6} \leq C(q)\|u\|_{L^{10/3}(Q_{r})}^{1/2}\||\omega_3|^{q/2}\|_{L^{20/7}(Q_{r})} + C(q) \||\omega_3|^{q/2}\|_{L^2(Q_{r})}.
\end{align}
For $j=1,2,\ldots$ we define the sequences $q_j$ and $r_j$
by the recursive relationships $q_{j+1} = (7/6)q_j$
and $r_{j+1} = r_j/2$.
Then,
from \eqref{loc_q3},
\begin{align}\label{loc_q4}
	\||\omega_3|^{q_{j+1}/2}\|_{L^{20/7}(Q_{r_{j+1}})}^{7/6} \leq C(q_j)\|u\|_{L^{10/3}(Q_{r_j})}^{1/2}\||\omega_3|^{q_j/2}\|_{L^{20/7}(Q_{r_j})} + C(q_j) \||\omega_3|^{q_j/2}\|_{L^2(Q_{r_j})}.
\end{align}
Starting with $q_0=2$, we get $q_j = 2(7/6)^j$ and we conclude that for any $q \in [2,\infty)$
\begin{align}
	|\omega_3|^{q/2} \in (L^\infty_t L^2_x \cap L^2_t H^1_x)(Q_{r/2})
\end{align}
for $r$ sufficiently small. Using a covering argument, we obtain
\begin{align}\label{eqn3.54}
	|\omega_3|^{q/2} \in (L^\infty_t L^2_x \cap L^2_t H^1_x)(Q_\rho)
\end{align}
for every $\rho\in(0,r/2)$ with an explicit estimate.
\end{proof}

In order to prove Theorem~\ref{thm2}, 
we also need the following auxiliary result. 

\begin{Lemma}\label{lemma3}\cite{G1}
	Let $\Omega$ be a bounded Lipschitz domain in $\R^3$. Let further $r \in (1,\infty)$ and $m \in \{0\} \cup \N$. Then there exists a linear operator $\mathcal{R}\colon W^{m,r}_0(\Omega) \to W^{m+1,r}_0(\Omega)$ with the properties
	\begin{itemize}
		\item[1.] $\div \mathcal{R}f = f$ for all $f \in W^{m,r}_0(\Omega)$ with $\int_\Omega f \;dx =0$, and
		\item[2.] there exists $C>0$ such that $\|\nabla^{j+1}\mathcal{R}f\|_{L^r(\Omega)} \leq C \|\nabla^j f\|_{L^r(\Omega)}$ for $j=1,\ldots,m$ and for all $f \in W^{m,r}_0(\Omega)$. 
	\end{itemize} 
\end{Lemma}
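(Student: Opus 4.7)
The statement is a classical result on the existence of a right inverse to the divergence on bounded Lipschitz domains, due to Bogovskii, and the proof should reproduce his construction. The plan is first to reduce to the case of a domain star-shaped with respect to a ball, then to write the inverse down as an explicit integral operator, and finally to read off both the divergence identity and the higher-order $W^{m+1,r}$ bounds via Calder\'on--Zygmund theory.

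\textbf{Step 1: Reduction to star-shaped domains.} Any bounded Lipschitz domain $\Omega \subset \mathbb{R}^3$ admits a finite cover by subdomains $\Omega_1,\dots,\Omega_N$, each star-shaped with respect to an open ball $B_i \Subset \Omega_i$. Using a partition of unity $\{\chi_i\}$ subordinate to this cover, one splits $f = \sum_i \chi_i f$, but the individual pieces need not be mean-zero on $\Omega_i$. A classical argument (the ``tree'' construction of Galdi) fixes this by inductively passing the defect in mean values between overlapping pieces along a connected chain, reducing the problem to solving $\div u_i = f_i$ on each $\Omega_i$ for mean-zero $f_i \in W^{m,r}_0(\Omega_i)$, and then defining $\mathcal{R}f = \sum_i \mathcal{R}_i f_i$ where each $\mathcal{R}_i$ acts on a star-shaped piece.

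\textbf{Step 2: Bogovskii's explicit formula on a star-shaped domain.} On $\Omega_i$, star-shaped with respect to $B_i$, fix a nonnegative $\omega \in C^\infty_c(B_i)$ with $\int \omega = 1$ and set
\begin{equation*}
\mathcal{R}_i f(x) = \int_{\Omega_i} f(y)\, \frac{x-y}{|x-y|^3} \int_{|x-y|}^{\infty} \omega\!\left(y + s\,\frac{x-y}{|x-y|}\right) s^2\, ds\, dy.
\end{equation*}
A direct differentiation under the integral sign, using $\int \omega = 1$ and integration by parts, yields the distributional identity $\div \mathcal{R}_i f = f - \omega \int_{\Omega_i} f$, which collapses to $\div \mathcal{R}_i f = f$ on the mean-zero subspace. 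The support properties of the kernel, combined with $f$ having zero trace, keep $\mathcal{R}_i f$ zero on $\partial\Omega_i$, giving membership in $W^{m+1,r}_0$.

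\textbf{Step 3: $L^r$ and Sobolev estimates.} Changing variables and computing $\nabla \mathcal{R}_i f$ show that its kernel is a classical Calder\'on--Zygmund operator (with an extra weakly singular term), so $\|\nabla \mathcal{R}_i f\|_{L^r} \le C \|f\|_{L^r}$ for $1 < r < \infty$ by the standard singular-integral theory on $\mathbb{R}^3$. The higher-order bound is obtained by commuting derivatives with $\mathcal{R}_i$: differentiating the defining formula $j$ times one can integrate by parts in $y$ to move $j$ derivatives onto $f$, yielding $\|\nabla^{j+1} \mathcal{R}_i f\|_{L^r} \le C \|\nabla^j f\|_{L^r}$ for $j = 1,\dots,m$. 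Summing over the partition of unity produces the required bound for $\mathcal{R}$.

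\textbf{The main obstacle.} The delicate point is the Calder\'on--Zygmund analysis of the kernel of $\nabla \mathcal{R}_i$: after differentiating, the inner radial integral produces a singular kernel of the form $K(x,y) = G((x-y)/|x-y|, y)/|x-y|^3$ with a cancellation property inherited from $\int \omega = 1$, and one must verify the standard size, smoothness, and cancellation conditions to invoke the $L^r$-boundedness for $1<r<\infty$. The patching argument in Step~1, though classical, is also somewhat involved in keeping track of the mean-value adjustments so that the global operator $\mathcal{R}$ remains linear and maps $W^{m,r}_0(\Omega)$ into $W^{m+1,r}_0(\Omega)$; this is precisely the technical content one imports from \cite{G1}.
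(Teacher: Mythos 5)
The paper does not prove this lemma; it is imported verbatim from Galdi's book \cite{G1} (the Bogovski\u{\i} operator), and your sketch --- localization to star-shaped pieces with mean-value corrections, the explicit Bogovski\u{\i} kernel, and Calder\'on--Zygmund bounds for the gradient with integration by parts in $y$ for the higher derivatives --- is precisely the standard argument behind that citation. Your outline is correct and matches the source the paper relies on, so there is nothing to object to beyond the (acknowledged) technical details deferred to \cite{G1}.
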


\begin{proof}[Proof of Theorem~\ref{thm2}]
First, note that the system \eqref{eqn_limit} may be rewritten as 
\begin{align}\label{eqn_stokes}
	 \partial_t u_1 - \omega_3 u_2 & = -\partial_1 \left(p + \frac{1}{2}|u|^2\right) + \Delta u_1 \nonumber \\
	 \partial_t u_2 + \omega_3 u_1 & = -\partial_2 \left(p + \frac{1}{2}|u|^2\right) + \Delta u_2 \\
	\partial_3 p & = 0 \nonumber \\
	\partial_1 u_1 + \partial_2 u_2 & = 0. \nonumber 
\end{align}
We show that any point $(x_0,t_0)$ is a regular point. 
By translation, we can assume without loss of generality that $(x_0,t_0)=(0,0)$. We denote $B_r=B_r(x_0)$ and $Q_r=Q_r(x_0,t_0)$.  Let $r>0$ be as in Lemma \ref{omega3reg}. Let $\eta$ be a smooth cut-off function supported on $B_{r/2}$ and such that $\eta \equiv 1$ on $B_{r/4}$. Let $v = \eta u - V$, where $V(\cdot,t) = \mathcal{R}(\nabla \eta \cdot u(\cdot,t))$, with $\mathcal{R}$ being the operator defined as in Lemma~\ref{lemma3} with $\Omega = B_{r/2}$.
Note that we have
\begin{align}
	\int_{B_{r/2}} \nabla \eta \cdot u\;dx = \int_{B_{r/2}} \div(\eta u)\;dx = \int_{\partial B_{r/2}} \eta u \cdot n \;dS =0,
\end{align}
where $n$ is the outer normal vector to $\partial B_{r/2}$ thus we can apply Lemma \ref{lemma3}. Moreover, $\div V = \nabla \eta \cdot u$ in~$Q_{r/2}$. 
 Note also that 
\begin{align}
	 V & \in L^2((-r^2,0), W^{2,2}_0(B_{r/4})) \nonumber \\
	 \partial_t V & \in L^2(B_{r/4} \times (-r^2/16,0)).
\end{align}
Moreover, the Sobolev embedding and the control over $\partial_t V$ yield that $V$ is essentially bounded on $Q_{r/4}$. 
In turn, the above defined $v$ solves the Stokes system
\begin{align}\label{eqn_local_v}
	& \partial_t v  - \Delta v + \nabla_2(\eta p + \frac{1}{2}\eta|u|^2) = (p + \frac{1}{2}|u|^2)\nabla_2 \eta - \partial_t V + \Delta V - \omega_3 V^\perp  - \omega_3 v^\perp
  .
\end{align}
Since $u$ is a weak solution, we obtain by interpolation $u \in L^5_tL^{30/11}_x(Q_{r/4})$. Thus $p \in L^{5/2}_t L^{15/11}_x(Q_{r/4})$, whence the first term on the right of (\ref{eqn_local_v}) belongs to $L^{5/2}_tL^{15/11}_x(Q_{r/4})$. The second, third, and fourth term belong to $L^2_tL^2_x(Q_{r/4})$, where for the fourth term we used the fact that $V$ is essentially bounded and the fact that by Lemma~\ref{omega3reg} applied with $q=2$ we have $\omega_3 \in L^\infty((-r^2,0),L^2(B_{r/4})) \cap L^2((-r^2,0),H^1(B_{r/4}))$ thus by interpolation $\omega_3 \in L^{10/3}_tL^{10/3}_x(Q_{r/4})$. The last term on the right of \eqref{eqn_local_v} belongs to $L^{5/2}_t L^{15/11}_x(Q_{r/4})$ since by interpolation $u,\omega_3 \in L^5_tL^{30/11}_x(Q_{r/4})$ and $V$ is essentially bounded on~$Q_{r/4}$. Therefore, in summary, the right side of \eqref{eqn_local_v} belongs to  $L^{5/2}_t L^{15/11}_x(Q_{r/4})$. The Stokes estimate (see \cite{SW}) applied to \eqref{eqn_local_v} yields $v \in L^{5/2}_t W^{2,15/11}_x(Q_{r/4})$ and thus by Sobolev embedding we obtain $v \in L^{5/2}_tL^{15}_x(Q_{r/4})$ which is a critical Serrin's regularity class.
\color{black}
\end{proof}

In order to prove Corollary~\ref{cor12} and Corollary~\ref{cor2} we need the following estimates on solutions of \eqref{eqn_limit}.
\begin{Lemma}\label{lem35}
Let $(u,p)$ be a solution of (\ref{eqn_limit}) and 
	\begin{align*}
		\frac{1}{r^2} \int_{Q_{r}(x_0,t_0)}( |u|^3 + |p|^{3/2})\;dxdt \leq M
	\end{align*}
for some $r>0$ and $(x_0,t_0)$.
Then for any $\epsilon_0\in(0,1)$ we have
	\begin{align}
	  \frac{1}{(\kappa r)^2} 
           \int_{Q_{\kappa r}(x_0,t_0)}( |u|^3 + |p|^{3/2})\;dxdt \leq \epsilon_{0}.
        \label{EQ06}
	\end{align}
for a constant $\kappa\in(0,1]$ depending only on $M$
and $\epsilon_0$.
\end{Lemma}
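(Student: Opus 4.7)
The plan is to prove Lemma~\ref{lem35} by a contradiction-and-compactness argument that reduces matters to the qualitative regularity result of Theorem~\ref{thm2}. Using the scaling invariance of \eqref{eqn_limit} I may assume $r=1$ and $(x_0,t_0)=(0,0)$. Suppose the conclusion fails: then there exist $M,\epsilon_0>0$, a sequence $\kappa_n\downarrow 0$, and weak solutions $(u_n,p_n)$ of \eqref{eqn_limit} on $Q_1$ with
\[
\int_{Q_1}\bigl(|u_n|^3+|p_n|^{3/2}\bigr)\,dx\,dt\le M,\qquad
\frac{1}{\kappa_n^2}\int_{Q_{\kappa_n}}\bigl(|u_n|^3+|p_n|^{3/2}\bigr)\,dx\,dt>\epsilon_0.
\]

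The first step is to upgrade the uniform $L^3/L^{3/2}$ control to uniform higher regularity on a fixed interior subcylinder. To this end I would revisit Lemma~\ref{omega3reg} and the Stokes argument used in the proof of Theorem~\ref{thm2}, tracking all constants as functions of $M$ alone. Lemma~\ref{omega3reg}, bootstrapped in $q$, yields a uniform bound on $\omega_3^{(n)}=\partial_1 u_2^{(n)}-\partial_2 u_1^{(n)}$ in $L^\infty_tL^q_x\cap L^2_tH^1_x(Q_{1/2})$ for every finite $q$. Applying the Stokes estimate to $v_n=\eta u_n-V_n$ exactly as in the proof of Theorem~\ref{thm2} then provides a uniform bound for $u_n$ in the Serrin class $L^{5/2}_tL^{15}_x(Q_{1/4})$, and standard parabolic bootstrapping promotes this to uniform $L^\infty$ (indeed $C^k$) bounds on, say, $Q_{1/8}$. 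Uniform pressure bounds on $Q_{1/8}$ then follow since $\partial_3 p_n=0$ reduces $p_n$ to the solution of a two-dimensional elliptic problem driven by~$u_n$.

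With the uniform bounds in hand, the equation controls $\partial_t u_n$ uniformly in suitable spaces, so Arzel\`a--Ascoli extracts a subsequence converging in $C(\overline{Q_{1/8}})$ to a limit $(u_\infty,p_\infty)$, which is easily verified to be a weak solution of \eqref{eqn_limit}. By Theorem~\ref{thm2}, $u_\infty$ is regular on $Q_{1/8}$, hence $\|u_\infty\|_{L^\infty(Q_{1/8})}+\|p_\infty\|_{L^\infty(Q_{1/8})}\le C(M)$, and therefore
\[
\frac{1}{\kappa^2}\int_{Q_\kappa}\bigl(|u_\infty|^3+|p_\infty|^{3/2}\bigr)\,dx\,dt\le C(M)\kappa^3\longrightarrow 0 \quad\text{as }\kappa\to 0.
\]
The uniform convergence $(u_n,p_n)\to(u_\infty,p_\infty)$ on $Q_{1/8}$ transfers the same estimate to $(u_n,p_n)$ along the subsequence with an $o(1)$ error; for $n$ large this contradicts the lower bound $\epsilon_0$, completing the argument.

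The main obstacle will be step one: the existing proof of Theorem~\ref{thm2} is qualitative, so I would need to track each estimate (Lemma~\ref{omega3reg}, the Stokes estimate for $v_n$, and the Serrin-class bootstrap) explicitly in terms of $M$, and ensure that all these estimates are carried out on a common subdomain of $Q_1$ independent of~$n$. Uniform control of the pressure is a subsidiary technical point, but one that should follow cleanly from the horizontal structure $\partial_3 p_n=0$ together with elliptic regularity driven by~$u_n$.
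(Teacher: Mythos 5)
Your ``step one'' is, in substance, the paper's entire proof, and the compactness layer you wrap around it is superfluous. The paper proves Lemma~\ref{lem35} directly by rerunning the argument of Theorem~\ref{thm2} quantitatively: Lemma~\ref{omega3reg} (whose iteration \eqref{eqn3.54} comes with explicit bounds in terms of the local energy norm, hence of $M$), then the Stokes estimate for $v=\eta u-V$ placing $v$ in the critical Serrin class, then \emph{one further} application of the Stokes estimate on the smaller cylinder $Q_{r/16}$ to land in a subcritical Serrin class, which yields explicit bounds on $u$ and $p$ near the center and hence \eqref{EQ06} for $\kappa$ small. Two points about your framing. First, once you have uniform bounds $\Vert u_n\Vert_{L^\infty(Q_{1/8})}+\Vert p_n\Vert_{L^\infty(Q_{1/8})}\le C(M)$, the scale-invariant quantity over $Q_\kappa$ is at most $C(M)\kappa^{3}$ uniformly in $n$, and the conclusion follows immediately; no subsequence, no limit solution $(u_\infty,p_\infty)$, and no appeal to Theorem~\ref{thm2} for the limit are needed, so the contradiction scaffolding does no work. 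Second, the purely qualitative version of your compactness argument could not close the proof: the violating scales $\kappa_n$ shrink with $n$, so one must control $\kappa_n^{-2}\int_{Q_{\kappa_n}}(|u_n|^3+|p_n|^{3/2})$ with both indices varying, which strong $L^3$ convergence on a fixed cylinder does not give (and weak $L^{3/2}$ convergence of the pressures transfers no smallness at all). Hence the ``main obstacle'' you defer --- tracking every constant in Lemma~\ref{omega3reg} and the Stokes bootstrap as a function of $M$ on a fixed subcylinder --- is not a technical afterthought but the entire content of the lemma. One caveat there: membership in the critical class $L^{5/2}_tL^{15}_x$ does not by itself yield quantitative $L^\infty$ bounds (critical Serrin regularity is qualitative), which is exactly why the paper iterates the Stokes estimate once more to reach a subcritical class; your appeal to ``standard parabolic bootstrapping'' should be made explicit in this form.
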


Consequently, under the assumptions of the theorem,
there exist $\kappa_0\in(0,1)$
and $K>0$ depending only on $M$ such
that
  \begin{equation}
   \kappa_0 r
   \Vert u\Vert_{L^\infty(Q_{\kappa_0 r})}
   \le
   K
   \period
   \label{EQ08}
  \end{equation}
The inequality \eqref{EQ06} implies \eqref{EQ08} 
with $\kappa_0=\kappa/2$
using the
standard CKN theory (cf.~\cite{CKN,K1}).

\begin{proof} 
Without loss of generality, we may assume that $(x_0,t_0)=(0,0)$. We denote $B_r(x_0) = B_r$ and $Q_r(x_0,t_0)=Q_r$. First, we note that by Lemma~\ref{omega3reg} we obtain $\omega_3 \in L^\infty((-r^2/16,0),L^2(B_{r/4})) \cap L^2((-r^2/16,0),H^1(B_{r/4}))$.
Similarly as in the proof of Theorem~\ref{thm2} we consider the abstract Stokes system (\ref{eqn_stokes}). Let $\eta$ be a smooth cut-off function supported on $B_{r/4}$ such that $\eta\equiv 1$ on $B_{r/8}$. We define $v = \eta u - V$ on $Q_{r/8}$. In particular  we obtain
\begin{align}
	& V \in L^2((-r^2/64,0), W^{2,2}_0(B_{r/8}(x_0))) \nonumber \\
	& \partial_t V \in L^2(B_{r/8}(x_0) \times (-r^2/64,0)).
\end{align}
Appropriate estimates follow from Lemma \ref{lemma3}, the Sobolev embedding theorem and interpolation.
On the other hand, the above defined $v$ solves the  Stokes system
\begin{align}\label{eqn_local_v2}
	& \partial_t v - \Delta v +  \nabla_2\left(\eta p + \frac{1}{2}\eta|u|^2\right) = \left(p + \frac{1}{2}|u|^2\right)\nabla_2 \eta - \partial_t V - \omega_3 V^\perp  + \Delta V - \omega_3 v^\perp.
\end{align}
Proceeding as in the proof of Theorem~\ref{thm2} we obtain that $v$ is in the critical Serrin's regularity class. Therefore, in order to prove \eqref{EQ06} we repeat the Stokes estimate on a smaller cylinder $Q_{r/16}$ which yields $v$ in a subcritical Serrin's regularity class. This combined with regularity properties of $V$ and \eqref{eqn3.54} gives us \eqref{EQ06} on a sufficiently small cylinder, that is on $Q_{\kappa r}$ for $\kappa \in (0,r/16)$ sufficiently small.
\end{proof}


\section{Proofs of Theorems~\ref{thm1} and~\ref{thm5}}
\label{sec4}
\setcounter{equation}{0}

In this section we present the proofs of Theorems~\ref{thm1} and~\ref{thm5}. In our considerations we use sequences of suitable weak solutions. 
In the process, we need the following compactness result.

\begin{Lemma}\label{lemma_conv}
	Let $(u^{(n)}, p^{(n)})$ be a sequence of suitable weak solutions such that 
	\begin{align}\label{bound_l1}
		\frac{1}{r^2}
		\int_{Q_r(x_0,t_0)} 
                   \bigl( 
                    |u^{(n)}|^3 + |p^{(n)}|^{3/2} 
                   \bigr)\;dx dt\leq M
          ,
	\end{align}
and let $0 < \rho < r$. Then there exists a subsequence $(u^{(n_k)}, p^{(n_k)})$ such that $u^{(n_k)} \to u$ strongly in $L^q(Q_\rho(x_0,t_0))$ for all $1 \leq q < 10/3$ and $p^{(n_k)} \rightharpoonup p$ weakly in $L^{3/2}(Q_\rho(x_0,t_0))$. 
\end{Lemma}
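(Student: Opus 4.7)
The plan is to extract the strong convergence from higher-order a priori bounds coming from the local energy inequality, combined with an Aubin--Lions type argument, and to obtain the weak convergence of the pressure directly from its uniform $L^{3/2}$ bound. Since we have control only on $Q_r(x_0,t_0)$, the argument must be performed by inserting intermediate radii $\rho<\rho_{1}<\rho_{2}<\rho_{3}<r$ and paying room at each step.

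First, I would test the local energy inequality (iii) from the definition of a suitable weak solution against a cutoff $\phi\in C_{0}^{\infty}(Q_{r})$ with $\phi\equiv 1$ on $Q_{\rho_{3}}$, $0\le\phi\le1$. The right-hand side is controlled using H\"older's inequality in terms of the quantities $\iint |u^{(n)}|^{2}$, $\iint |u^{(n)}|^{3}$, and $\iint |u^{(n)}||p^{(n)}|$, all of which are bounded by (\ref{bound_l1}) via the embedding $L^{3}\hookrightarrow L^{2}$ on the bounded cylinder and H\"older with exponents $3$ and $3/2$. This gives
\begin{equation*}
 \sup_{t}\int_{B_{\rho_{3}}(x_{0})}|u^{(n)}(\cdot,t)|^{2}\,dx
 +\iint_{Q_{\rho_{3}}}|\nabla u^{(n)}|^{2}\,dx\,dt
 \le C(M,r),
\end{equation*}
so $(u^{(n)})$ is uniformly bounded in $L^{\infty}_{t}L^{2}_{x}\cap L^{2}_{t}H^{1}_{x}(Q_{\rho_{3}})$, and by the usual Ladyzhenskaya--Sobolev interpolation also uniformly bounded in $L^{10/3}(Q_{\rho_{3}})$.

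Next, I would use the Navier--Stokes system to control the time derivative. Rewriting $\partial_{t}u^{(n)}=\Delta u^{(n)}-\sum_{j}\partial_{j}(u_{j}^{(n)}u^{(n)})-\nabla p^{(n)}$, the three terms on the right are, respectively, bounded in $L^{2}_{t}H^{-1}_{x}$, in $L^{3/2}_{t}W^{-1,3/2}_{x}$ (using the $L^{3}$ bound on $u^{(n)}$ together with $\partial_{j}(u_{j}u)$ being a divergence), and in $L^{3/2}_{t}W^{-1,3/2}_{x}$ (using the $L^{3/2}$ bound on $p^{(n)}$), all on the smaller cylinder $Q_{\rho_{2}}$. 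Hence $(\partial_{t}u^{(n)})$ is uniformly bounded in $L^{3/2}(-\rho_{2}^{2}+t_{0},t_{0};W^{-s,3/2}(B_{\rho_{2}}(x_{0})))$ for some $s>0$. Combining the $L^{2}_{t}H^{1}_{x}$ spatial regularity with the uniform $L^{3/2}_{t}W^{-s,3/2}_{x}$ time regularity, the Aubin--Lions--Simon lemma produces a subsequence (still denoted $u^{(n)}$) and a limit $u$ such that $u^{(n)}\to u$ strongly in $L^{2}(Q_{\rho_{1}})$.

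Finally, interpolation between strong $L^{2}$ convergence and the uniform $L^{10/3}$ bound yields strong convergence in $L^{q}(Q_{\rho_{1}})$ for every $q\in[2,10/3)$, and after a further restriction to $Q_{\rho}$ and combining with the trivial bound in $L^{q}$ for $q<2$, we obtain the claim for all $1\le q<10/3$. For the pressure, the uniform $L^{3/2}$ bound given by (\ref{bound_l1}) and the reflexivity of $L^{3/2}$ deliver a further subsequence converging weakly in $L^{3/2}(Q_{\rho})$ to some $p$. The main technical obstacle, in my view, is the time-regularity step, because the pressure only lies in $L^{3/2}$ and the nonlinearity is a space divergence of an $L^{3/2}$ quantity, so one must choose the negative Sobolev index $s$ carefully to make Aubin--Lions applicable; once this is arranged, the rest of the argument is a standard interpolation and passage to subsequences.
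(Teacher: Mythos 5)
Your proposal is correct and follows essentially the same route as the paper: the local energy inequality tested against a cutoff gives the uniform $L^\infty_t L^2_x\cap L^2_t H^1_x$ (hence $L^{10/3}$) bound, the equation gives a uniform bound on $\partial_t u^{(n)}$ in a negative-order space (the paper uses $L^{3/2}_t((H^2_0)^\ast)_x$ where you use $L^{3/2}_tW^{-s,3/2}_x$), Aubin--Lions yields strong convergence in a low $L^q$ norm, and interpolation with the $L^{10/3}$ bound upgrades it to all $q<10/3$, with the pressure handled by weak compactness of bounded sets in $L^{3/2}$. The insertion of intermediate radii and the choice of dual space are cosmetic differences only.
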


\begin{proof}[Proof of Lemma~\ref{lemma_conv}] Let $\phi \in
C^\infty_0(D)$ be such that $\phi \geq 0$ in $D$, $\phi =1 $ on
$B_\rho(x_0) \times (t_0-\rho^2,t_0)$ and 
$\supp(\phi) \subset Q_r(x_0,t_0)$. 
The local energy inequality for suitable weak solution yields
  \begin{align}
	& \int_{B_\rho(x_0)}|u(\cdot,t)|^2 \;dx + 2 \int_{Q_\rho(x_0,t_0)} |\nabla u|^2 \;dx dt \nonumber \\
	& \indeq \leq \int_{\Qr} |u|^2(\partial_t \phi + \Delta \phi)\;dx dt 
         +  \int_{\Qr}(|u|^2 + 2p)u \cdot \nabla \phi\;dx dt
    \comma -\rho^2 \le t\le 0
  \end{align}
H\"older's inequality and the bound \eqref{bound_l1} imply that 
there exists a constant $E>0$ (where $E=E(M,\rho)$) such that
  \begin{align}
	& \int_{B_\rho(x_0)}|u(\cdot,t)|^2 \;dx + 2 \int_{Q_\rho(x_0,t_0)} |\nabla u|^2 \;dx dt \leq E
    \comma -\rho^2 \le t\le 0
   .
  \end{align}
Possibly passing to a subsequence, we may assume that $u^{(n)} \rightharpoonup u$ in $L^2((t_0-\rho^2,t_0),H^1(B_\rho(x_0)))$ and weak-$\ast$ in $L^\infty((t_0-\rho^2,t_0),L^2(B_\rho(x_0)))$. We may also assume that $p^{(n)} \rightharpoonup p$ in $L^{3/2}(Q_\rho(x_0,t_0))$. The equations
\begin{align}
	\partial_t u^{(n)} = \Delta u^{(n)} - u^{(n)} \cdot \nabla u^{(n)} - \nabla p^{(n)} \indeq \text{in $Q_\rho(x_0,t_0)$}
\end{align}
and the weak convergence $u^{(n)} \rightharpoonup u$ in $L^2((t_0-\rho^2,t_0),H^1(B_\rho(x_0)))$ and weak-$\ast$ in $L^\infty((t_0-\rho^2,t_0),L^2(B_\rho(x_0)))$ along with the $L^{3/2}$ bound on $p^{(n)}$ imply that $\partial_t u^{(n)} \in L^{3/2}((t_0-\rho^2,t_0),(H^2_0)^\ast(B_\rho(x_0)))$ with a uniform bound
\begin{align}
	\|\partial_t u^{(n)}\|_{L^{3/2}((t_0-r^2,t_0),(H^2_0)^\ast(B_\rho(x_0)))} \leq C,
\end{align}
where the constant $C$ may depend on $E$. 
Therefore, by the Aubin-Lions compactness lemma we conclude that $u^{(n)} \to u$ strongly in $L^{3/2}(Q_\rho(x_0,t_0))$. Since $u^{(n)}$ is bounded uniformly in $L^{10/3}(Q_\rho(x_0,t_0))$, by interpolation we get that $u^{(n)} \to u$ strongly in $L^q(Q_\rho(x_0,t_0)))$ for all $1 \leq q < 10/3$.	
\end{proof}

We first prove the stronger result, namely Theorem~\ref{thm5}.

\begin{proof}[Proof of Theorem~\ref{thm5}]

Without loss of generality, we may assume that
$(x_0,t_0)=(0,0)$. Denote $Q_r=Q_r(x_0,t_0)$.
Fix $r>0$ and assume that there exists a sequence of suitable weak solutions $(u^{(n)},p^{(n)})$ with
  \begin{align}\label{bound21}
	\frac{1}{r^2}
	\int_{Q_r} (|u^{(n)}|^3 + |p^{(n)}|^{3/2})\;dx dt \leq M
  \end{align}
and
  \begin{align}\label{bound31}
     \frac{1}{r^{2}}\int_{Q_r}
     |u_3^{(n)}|^3 \;dx dt \to 0
     ,
  \end{align}
  but
   \begin{align}
		\frac{1}{(\kappa r)^2}
		\int_{Q_{\kappa r}}  
	               (     |u^{(n)}|^3 
                             +  |p^{(n)}|^{3/2} )
                 	\;dx dt > \epsilon_0
   \label{EQ33}
  \end{align}
for every $\kappa\in(0,1)$.
By Lemma \ref{lemma_conv}, we may 
divide $r$ by $2$ and assume that
$u^{(n)} \to u$ strongly in $L^3(Q_r)$ 
and $p^{(n)} \rightharpoonup p$ weakly in $L^{3/2}(Q_r)$. 
Note that $(u,p)$ solves the system (\ref{eqn_limit}). Theorem~\ref{thm2} implies that 
  \begin{align}
	r^{1/6}\|u\|_{L^6(Q_{r}(x_0,t_0))}\le M_0<\infty
  \end{align}
where $M_0$ depends only on $M$.
By rescaling we now assume that $r=1$.
For $\kappa_1 \in (0,1)$, 
which is to be determined below
we obtain
  \begin{align}
     \|u\|_{L^3(Q_{\kappa_1 })} 
       \leq C \kappa_1 ^{5/6}\|u\|_{L^6(Q_{\kappa_1})} 
       \leq C \kappa_1^{5/6}M_0
  \end{align}
using H\"older's inequality,
from where
  \begin{align}\label{conc1aa}
    \frac{1}{\kappa_1 ^{2/3}}\|u\|_{L^3(Q_{\kappa_1})} 
    \leq C \kappa_1^{1/6}
       \|u\|_{L^6(Q_{\kappa_1 })}   \leq C\kappa_1^{1/6}M_0.
  \end{align}
There exists $\kappa_0 >0$ such that for $\kappa_1 \in (0,\kappa_0]$ we have
\begin{equation}\label{EQ01zz}
	C \kappa_1 ^{1/6}M_0 \pdot
	\le
	\frac{1}{2}\epsilon_0^{1/3} \kappa_1^{1/12}.
\end{equation}
In particular, the inequalities \eqref{conc1aa} and \eqref{EQ01zz} then imply
  \begin{align}\label{cont_limitazz}
    \frac{1}{\kappa_1^{2}} \int |u|^3\;dxdt \leq \frac{1}{4}\epsilon_0 \kappa_1^{1/4}
   .
  \end{align}
Since $u^{(n)} \to u$ strongly in 
$L^3_{{\rm loc}}(Q_{1})$, 
we may choose $n$ large enough (depending on $\kappa_1$) so that 
  \begin{align}\label{EQ31}
     \frac{1}{\kappa_1^{2}} \int_{Q_{\kappa_1 }}|u^{(n)}|^3\;dx dt
     \le
	\epsilon_0 \kappa_1^{1/4}
  \end{align}
for $n$ sufficiently large. 

We rewrite the pressure equation as in~\cite{K1} as
  \begin{align}
   \Delta(\eta p^{(n)})
   &
   =
   -\partial_{ij}(\eta u_i^{(n)} u_j^{(n)})
   -u_i^{(n)} u_j^{(n)}\partial_{ij}\eta
   +\partial_{j}(u_i^{(n)} u_j^{(n)}\partial_{i}\eta)
   \nonumber\\&\indeq
   +\partial_{i}(u_i^{(n)} u_j^{(n)}\partial_{j}\eta)
   -p^{(n)}\Delta\eta
   +2\partial_{j}((\partial_{j}\eta)p^{(n)})
   \label{EQ34}
  \end{align}
where $\eta$ is a smooth cut-off function supported in
$B_{\kappa_1}$ identically $1$ on $B_{\kappa'\kappa_1}$
where $\kappa'\in (0,1/2]$
is to be determined below.
With $N=-1/4\pi|x|$ denoting the Newtonian potential,
we obtain
  \begin{align}
   \eta p^{(n)}
   &=
   R_i R_j(\eta u_i^{(n)} u_j^{(n)})
   -N*(u_i^{(n)} u_j^{(n)}\partial_{ij}\eta)
   +\partial_{j}N*(u_i^{(n)} u_j^{(n)}\partial_{i}\eta)
   \nonumber\\&\indeq
   +\partial_{i}N*(u_i^{(n)} u_j^{(n)}\partial_{j}\eta)
   -N*(p^{(n)}\Delta\eta)
   +2\partial_{j}N*((\partial_{j}\eta)p^{(n)})
   \nonumber\\&
   =
   p_1+p_2+p_3+p_4+p_5+p_6
   .
   \label{EQ09}
  \end{align}
For $p_1$, we have by the Calder\'on-Zygmund 
theorem
  \begin{equation}
   \Vert p_1\Vert_{L^{3/2}(Q_{\kappa'\kappa_1 })}   
   \le C \Vert u^{(n)}\Vert_{L^3(Q_{\kappa_1 })}^2
   .
   \label{EQ10}
  \end{equation}
For the rest of the terms, we use the fact that they
all contain derivatives
of $\eta$. This makes all the convolutions nonsingular
when $|x|\le \kappa'\kappa_1 $ (cf.~\cite{K1} or \cite{L} for details).
Using this, we obtain the estimate for
$p_2$, $p_3$, and $p_4$ which is as in \eqref{EQ10}.
For $p_5$, we have, as in \cite{K1},
  \begin{align}
 &  \Vert p_5\Vert_{L^{3/2}(Q_{\kappa'\kappa_1 })}
   \le
  {C(\kappa'\kappa_1)^2} \|p^{(n)}\|_{L^{3/2}(Q_1)}
     \le
  {C(\kappa'\kappa_1)^2} { M^{2/3}}
   \label{EQ12}
  \end{align}
  The same bound holds for $p_6$.
Summarizing, we obtain
  \begin{equation}
   \frac{1}{(\kappa'\kappa_1 )^{2/3}}
   \Vert p^{(n)}\Vert_{L^{3/2}(Q_{\kappa'\kappa_1 })}^{1/2}
   \le
   \frac{C_0}{(\kappa' \kappa_1 )^{2/3}}\Vert u^{(n)}\Vert_{L^{3}(Q_{\kappa_1 })}
   + C_0  { (\kappa' \kappa_1 )^{1/3}} M^{1/3}
   \label{EQ11a}
  \end{equation}
where $C_0$ is a constant. 
Using \eqref{EQ31} we bound the right side of \eqref{EQ11a} by
\begin{equation}
	  \frac{C_0 \epsilon_0}{(\kappa' )^{2/3}}
	  \kappa_1^{1/12}
   + C_0  { (\kappa' \kappa_1 )^{1/3}} M^{1/3} 
   = \frac{C_0 \epsilon_0}{(\kappa')^{2/3}}\kappa_1^{1/12} 
   + C_0 (\kappa'\kappa_1)^{1/3}M^{1/3}.
   \label{EQ_last1}
\end{equation}
We can choose $\kappa'$ and $\kappa_1$ 
small enough so that the right side of \eqref{EQ_last1} 
is smaller than $\epsilon_0^{1/3}/2$. 
By possibly making $\kappa_1$ smaller, 
we also have from \eqref{EQ31} that 
  \begin{align}\label{EQ31z}
      \frac{1}{(\kappa'\kappa_1 )^{2}} \int_{Q_{\kappa'\kappa_1 }}|u^{(n)}|^3\;dx dt
     \le
	\epsilon_0 (\kappa'\kappa_1)^{1/4} \le \frac{1}{2} \epsilon_0.
  \end{align}
Thus, setting $\kappa = \kappa' \kappa_1$ we get 
\begin{equation}
	\frac{1}{\kappa^{2}} \int_{Q_{\kappa }} (|u^{(n)}|^3+|p^{(n)}|^{3/2})\;dx dt  \le \epsilon_0,
\end{equation}
which leads to a contradiction with \eqref{EQ33}.
\end{proof}

\begin{proof}[Proof of Theorem~\ref{thm1}]
The statement follows from Theorem~\ref{thm5}
by setting $\epsilon_0=\epsilon_{CKN}$.
\end{proof}

\section{Proofs of Theorems~\ref{thm4} and~\ref{thm6}}
\label{sec5}
\setcounter{equation}{0}

Since Theorem~\ref{thm6} is more general, we start with it first.

\begin{proof}[Proof of Theorem~\ref{thm6}]
In order to prove Theorem~\ref{thm6}, 
assume that there exists a sequence of suitable weak solutions $(u^{(n)},p^{(n)})$ satisfying
  \begin{align}
		\frac{1}{r^2}
		\int_{Q_r(x_0,t_0)}  
	               (     |u^{(n)}|^3 
                             +  |p^{(n)}|^{3/2} )
                 	\;dx dt\leq M
   \label{EQ17b}
  \end{align}
and
  \begin{align}
		\frac{1}{r^2}
		\int_{Q_r(x_0,t_0)}  
	                    |p^{(n)}|^{3/2} 
                \; dx dt
            \to 0,
   \label{EQ19b}
  \end{align}
but
  \begin{align}
		\frac{1}{(\kappa r)^2}
		\int_{Q_{\kappa r}(x_0,t_0)}  
	               (     |u^{(n)}|^3 
                             +  |p^{(n)}|^{3/2} )
                 	\;dx dt > \epsilon_0
   \period
   \label{EQ20b}
  \end{align}
for a certain $\kappa \in (0,1)$
to be determined explicitly below. 
By Lemma~\ref{lemma_conv}, we may divide $r$ by $2$ and assume that $u^{(n)} \to u$ strongly in $L^3(Q_r(x_0,t_0))$. Note that $u$ solves the Burgers system
 \begin{align}
	\partial_t u_i - \Delta u_i 
            + \sum_{j=1}^3 u_i\partial_j u_j &= 0 \indeq \text{ in $D$}
     \comma i=1,2,3 
   \label{EQ05}
   \end{align}
with
  \begin{align}
	 \partial_1 u_1 + \partial_2 u_2 + \partial_3 u_3& =0 \indeq \text{ in $D$}.
   \label{EQ07}
  \end{align}
It is well-known that solutions of \eqref{EQ05} are regular (see e.g.~\cite{C}); alternatively, we may use 
local $L^{6}$ estimates combined with the divergence-free condition \eqref{EQ07}. Therefore, we get 
 \begin{align}
	r^{1/6}\|u\|_{L^6(Q_{r}(x_0,t_0))}\le M_0<\infty
  \end{align}
where $M_0$ depends only on $M$. By rescaling we now assume that $r=1$.

We then proceed as in the proof of Theorem~\ref{thm1}.
Namely, for $\kappa \in (0,1)$, H\"older's inequality yields
  \begin{align}
     \|u\|_{L^3(Q_\kappa(x_0,t_0))} 
       \leq C \kappa^{5/6}\|u\|_{L^6(Q_\kappa(x_0,t_0))} 
       \leq C\kappa^{5/6}M_0
  \end{align}
which implies
  \begin{align}\label{conc1}
   \frac{1}{\kappa^{2/3}}\|u\|_{L^3(Q_\kappa(x_0,t_0))} 
   \leq C \kappa^{5/6}
       \|u\|_{L^6(Q_\kappa(x_0,t_0))}   \leq C\kappa^{5/6}M_0
   . 
  \end{align}
Let $\kappa$ be sufficiently small so that 
  \begin{equation}
   C\kappa^{5/6}M_0 
   \leq 
\frac16 \epsilon_0^{1/3}
   .
   \label{EQ011aa}
  \end{equation}
The inequalities \eqref{conc1} and \eqref{EQ011aa} then imply
  \begin{align}\label{cont_limita}
    \frac{1}{\kappa^{2}} \int_{Q_\kappa(x_0,t_0)}|u|^3\;dxdt
    \leq 
   \frac16 \epsilon_{0}
   .
  \end{align}
Since $u^{(n)} \to u$ strongly in 
$L^3_{{\rm loc}}(Q_{1}(x_0,t_0))$, 
we may choose $n$ large enough so that 
  \begin{align}\label{contr1}
     \frac{1}{\kappa^{2}} \int_{Q_\kappa(x_0,t_0)}|u^{(n)}|^3\;dxdt
     \le
     \frac12 \epsilon_{0}
    .
  \end{align}
From (\ref{EQ19b}) and (\ref{contr1}) for sufficiently large $n$ we obtain
\begin{align}
  	\frac{1}{\kappa^2} \int_{Q_{\kappa}(x_0,t_0)} (|u^{(n)}|^3 + |p^{(n)}|^{3/2})\;dx dt \leq \epsilon_{0}.
  \end{align}
 which contradicts \eqref{EQ20b}.
\end{proof}

\begin{proof}[Proof of Theorem~\ref{thm4}]
Theorem~\ref{thm4} follows from Theorem~\ref{thm6} using the CKN criterion for
regularity.
\end{proof}

\bigskip

\noindent \footnotesize{\bf Acknowledgments.} I.K.\ was
supported in part by the NSF grants DMS-1311943, W.R. was supported in part by the NSF grant DMS-1311964,
while M.Z.\ was supported in part by the NSF grant DMS-1109562.
\normalfont
\normalsize


\end{document}